\documentclass[english, 12pt]{article}
\usepackage[T1]{fontenc}
\usepackage[latin9]{inputenc}
\usepackage{babel}
\usepackage{mathrsfs}
\usepackage{bm}
\usepackage{bbm}
\usepackage{amsmath}
\usepackage{amsthm}
\usepackage{appendix}
\usepackage[unicode=true,pdfusetitle,bookmarks=false,bookmarksnumbered=false,bookmarksopen=false,breaklinks=false,pdfborder={0 0 1},backref=false,colorlinks=true,allcolors=blue]{hyperref}

\makeatletter
%%%%%%%%%%%%%%%%%%%%%%%%%%%%%% Textclass specific LaTeX commands.
\numberwithin{equation}{section}
\theoremstyle{plain}
\newtheorem{thm}{Theorem}[section]
\newtheorem{lemma}[thm]{Lemma}
\newtheorem{prop}[thm]{Proposition}
\newtheorem{cor}[thm]{Corollary}
\theoremstyle{definition}
\newtheorem{rmk}[thm]{Remark}

%%%%%%%%%%%%%%%%%%%%%%%%%%%%%% User specified LaTeX commands.
\usepackage{amssymb,amsfonts}
\usepackage{geometry}
\geometry{a4paper, centering, scale=0.8}
\usepackage{algorithm}
\usepackage{algorithmic}
\usepackage{float}
\usepackage{amsthm}
\usepackage{amsmath}

\newcommand{\md}{\mathop{}\mathopen\mathrm{d}}

 % real number set
 % expactation
 % filtration
 % probability 
 % space of PM
 % Wasserstein

\makeatother

\begin{document}
\title{Uniform-in-Time Estimates on the Size of Chaos for Interacting Particle Systems}
\author{Pengzhi Xie}
\author{Pengzhi Xie\footnote{School of Mathematical Sciences, Fudan University, Shanghai 200433, China. 22110180046@m.fudan.edu.cn.}
}
\maketitle
\begin{abstract}
For any weakly interacting particle system with bounded kernel, we give uniform-in-time estimates of the $L^2$ norm of correlation functions, provided that the diffusion coefficient is large enough. When the condition on the kernels is more restrictive, we can remove the dependence of the lower bound for diffusion coefficient on the initial data and estimate the size of chaos in a weaker sense. Based on these estimates, we may study fluctuation around the mean-field limit.
\end{abstract}
\section{Introduction}\label{Intro}
\subsection{Overview}
Consider the first order interacting particle system on the torus with i.i.d. noise
\begin{equation}\label{ParticleSystem}
    \left\{
    \begin{aligned}
        &\md X_t^i=\frac{1}{N}\sum_{j=1}^NK(X_t^i,X_t^j)\md t+\sqrt{2\sigma}\md W_t^i,\\
        &X_t^i\vert_{t=0}=X_0^i
    \end{aligned}
    \right.
\end{equation}
where $X_t^i\in\mathbb{T}^d$ for $i=1,\dots,N$ are the positions of particles at time $t$. Here $W^i$'s are i.i.d. Brownian motions and $X_0^i\overset{i.i.d.}{\sim}\rho_0,\;i=1,\dots,N$. Then, at least formally, the joint distribution of $(X_t^1,\dots,X_t^N)$, denoted by $f_N$, satisfies the Liouville equation
\begin{equation}\label{Liouville}
    \left\{
    \begin{aligned}
        &\partial_t f_N+\sum_{i=1}^N\mathrm{div}_{x_i}\bigg( f_N\,\frac{1}{N}\sum_{j=1}^NK(x_i,x_j)\bigg)=\sigma\sum_{i=1}^N\Delta_{x_i}f_N,\\
        &f_N\vert_{t=0}=\rho_0^{\otimes N}.
    \end{aligned}
    \right.
\end{equation}
Then exchangeability of the particles is equivalent to symmetry of $f_N$ in its $N$ variables and we may study $f_{[j],N}$, the $j$-marginal of $f_N$, where $[j]=\{1,\dots,j\}$. That is,
\[f_{[j],N}(x_1,\dots,x_j)=\int\dots\int f_N(x_1,\dots,x_N)\md x_{j+1}\dots\md x_{N}.\]
Here and thereafter we put the index set on the subscript of some function $h$ to emphasize the variables involved in it. For any $P\subset\mathbb{N}\cup\{*\}$ with $\vert P\vert<\infty$, and any $h_P:(\mathbb{T}^d)^P\mapsto\mathbb{R}$, define
\begin{align}
    S_{k,l}h_P:(\mathbb{T}^d)^P&\mapsto\mathbb{R},\notag\\
    \Big(S_{k,l}h_P\Big)(x_\alpha;\alpha\in P)&:=\mathrm{div}_{x_k}\bigg(K(x_k,x_l)h_P(x_\alpha;\alpha\in P)\bigg).\notag
\end{align}
Moreover, if $*,k\in P$ and $k\ne *$, denote
\begin{align}
    H_kh_P:(\mathbb{T}^d)^{P- \{ * \}}&\mapsto\mathbb{R},\notag\\
    \Big(H_kh_P\Big)(x_\alpha;\alpha\in P- \{ * \})&:=\mathrm{div}_{x_k}\left(\int K(x_k,x_*)h(x_\alpha;\alpha\in P)\md x_*\right).\notag
\end{align}
Then the evolution of $f_{[j],N}$'s is described by the BBGKY hierarchy
\begin{align}\label{BBGKY}
    \partial_tf_{[j],N}-\sigma\sum_{k=1}^j\Delta_{x_k}f_{[j],N}+\frac{1}{N}\sum_{k,l=1}^jS_{k,l}f_{[j],N}=-\frac{N-j}{N}\sum_{k=1}^jH_kf_{[j]\cup\{*\},N}.
\end{align}
When the number of particles $N$ is large, we can seek for averaged behavior of one typical particle in the system. For example, consider the $1$-marginal $f_{[1],N}$, one may expect that it is close to the solution to the McKean-Vlasov equation
\begin{align}\label{McKeanVlasov}
    \partial_t\rho-\sigma\Delta\rho+\mathrm{div}\bigg(\rho\int_{\mathbb{T}^d} K(x,x_*)\rho(x_*)\md x_*\bigg)=0
\end{align}
with initial data $\rho_t\vert_{t=0}=\rho_0$.\\
Correction to such a mean-field approximation is usually based on the so-called correlation functions $g_{[j],N}$\cite{D21,HR23,BD24}, see also \cite{PS17,PPS19,BDJ24,DHM24} for its variants. Roughly speaking, $g_{[1],N}$ equals exactly the $1$-marginal $f_{[1],N}$ and all the $g_{[j],N}$'s are small for $j\ge2$. That is, we want to estimate $g_{[j],N}$ to determine the \textbf{size of chaos}. It is proved in \cite{D21,BD24} that for second order particle systems, the correlation function with $j$ variables is of order $\frac{1}{N^{j-1}}$. However, the kernel there is required to be smooth enough. The paper \cite{HR23} tried to conduct asymptotic expansion of the correlation functions and consider kind of $L^2$ bounds of them on finite time intervals, for first order diffusive particle system with bounded kernels. However, it is not clear whether such asymptotic expansion converges.
\subsection{Main Results}
Given all the marginals $f_{[j],N}$'s, define the $m$ particle correlation function by
\[g_{[m],N}:=\sum_{\pi\vdash[m]}(-1)^{\vert\pi\vert-1}(\vert\pi\vert-1)!\prod_{P\in\pi}f_{P,N}.\]
Thanks to the combinatorial identity
\begin{equation}
    \sum_{\sigma\le\pi}(-1)^{\vert\sigma\vert-1}(\vert\sigma\vert-1)!=\left\{\begin{aligned}
        &1,\quad&\vert\pi\vert=1,\\
        &0,\quad&\vert\pi\vert\ge2,
    \end{aligned}\right.
\end{equation}
we can express $f_{[j],N}$'s in terms of $g_{[m],N}$'s
\[f_{[j],N}=\sum_{\pi\vdash[j]}\prod_{P\in\pi}g_{P,N},\]
where $\pi\vdash[m]$ means that $\pi$ is a partition of the set $[m]$ and $\sigma\le\pi$ means that $\pi$ is a refinement of the partition $\sigma$. Our first result shows that, under the same setting as \cite{HR23}, we can decide the exact $L^2$ size of chaos uniformly-in-time.
\begin{thm}\label{MainThm1}
    Assume that the kernel $K\in L^\infty(\mathbb{T}^{2d};\mathbb{R}^d)$ and that $\rho_0\in L^2(\mathbb{T}^d)$. Let $f_{[j],N}$'s be solution to BBGKY hierarchy \eqref{BBGKY} with chaotic initial data and let $g_{[m],N}$'s be the corresponding correlation functions. Then there exist constants $C=C(\Vert K\Vert_{L^\infty},\Vert\rho_0\Vert_{L^2})$ and $C_0=C(\Vert\rho_0\Vert_{L^2})$ such that, if $\sigma>C$, then for all $N$ the following quantitative estimate of the $L^2$ size of chaos
    \[\Vert g_{[m],N}(t)\Vert_{L^2(\mathbb{T}^{dm})}\le\frac{C_0(m-1)!}{m^2N^{m-1}}\]
    holds for all $m=1,\dots,N$ and all $t\ge0$.
\end{thm}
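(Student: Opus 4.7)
The plan is to derive an evolution hierarchy for the correlation functions themselves by M\"obius-inverting \eqref{BBGKY}, and then close it via weighted $L^2$ energy estimates that exploit the large-$\sigma$ Poincar\'e dissipation. Substituting $f_{[j],N}=\sum_{\pi\vdash[j]}\prod_{P\in\pi}g_{P,N}$ into \eqref{BBGKY} and isolating the ``connected'' contribution, one obtains a system of the schematic form
\[\partial_t g_{[m],N}-\sigma\sum_{k=1}^m\Delta_{x_k}g_{[m],N}=A_m+B_m+C_m,\]
where $A_m$ collects the transport inside $[m]$ (bilinear in $g$'s indexed by blocks of partitions of $[m]$), $B_m=-\sum_{k=1}^m H_k g_{[m]\cup\{*\},N}$ is the mean-field coupling to the next order, and $C_m$ is a $1/N$ remainder produced by the $(N-j)/N$ prefactor together with the diagonal $S_{k,k}$ contributions. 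The crucial structural fact, inherited from M\"obius inversion, is that $g_{[m],N}$ has vanishing integral in each variable for $m\ge 2$, so on the torus Poincar\'e gives $\Vert g_{[m],N}\Vert_{L^2}\le C_P\Vert\nabla g_{[m],N}\Vert_{L^2}$ with an absolute constant $C_P$.

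Testing this equation against $g_{[m],N}$ itself, the Laplacian produces $\sigma\Vert\nabla g_{[m],N}\Vert_{L^2}^2$, which for $m\ge 2$ dominates $\sigma C_P^{-1}\Vert g_{[m],N}\Vert_{L^2}^2$. After integration by parts, each transport term is bounded using $\Vert K\Vert_{L^\infty}<\infty$ and Cauchy--Schwarz; a fraction of the dissipation absorbs the quadratic-in-$g_{[m]}$ contributions, leaving coefficients of order $\Vert K\Vert_\infty^2/\sigma$ which are small once $\sigma$ is large. The remaining bilinear terms couple $\Vert g_{[m],N}\Vert_{L^2}$ to $\Vert g_{[m+1],N}\Vert_{L^2}$ (through $H_k$) and to products $\Vert g_{[k],N}\Vert_{L^2}\Vert g_{[m-k+1],N}\Vert_{L^2}$ coming from the partition sums inside $A_m$. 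The block of size $1$ in such partitions is $g_{[1],N}$, and its $L^2$-bound $\Vert g_{[1],N}\Vert_{L^2}\le C(\Vert\rho_0\Vert_{L^2})$ is obtained in parallel from the BBGKY equation for $f_{[1],N}$ via a standard energy argument together with the torus Poincar\'e inequality applied to $f_{[1],N}-1$, again under the large-$\sigma$ condition.

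To close the hierarchy, I would introduce $u_m(t):=\frac{m^2 N^{m-1}}{(m-1)!}\Vert g_{[m],N}(t)\Vert_{L^2}$ and $U(t):=\sup_{m\ge 2}u_m(t)$; the weights are engineered so that the $1/N$ in $C_m$ compensates the shift $N^{m-1}\to N^{m}$, the shift $m\to m+1$ in the $H_k$-coupling is compensated by the factorial ratio $(m-1)!/m!$, and the bilinear partition sums remain summable thanks to $\sum_{k\ge 1}1/k^{2}<\infty$. For $\sigma$ above an explicit threshold depending on $\Vert K\Vert_\infty$ and $\Vert\rho_0\Vert_{L^2}$, the estimates then collapse into a differential inequality of the form $\frac{d}{dt}U(t)\le -\lambda U(t)+R$ with $\lambda>0$ and $R=R(\Vert\rho_0\Vert_{L^2})$, and since chaotic initial data gives $u_m(0)=0$ for $m\ge 2$, Gronwall yields $U(t)\le R/\lambda$ uniformly in time. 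The hard part will be the combinatorial bookkeeping in this last step: verifying that the partition-sum coefficients produced by $A_m$ and $C_m$ really are compatible with the prefactor $(m-1)!/m^{2}$, so that the cascade of bilinear couplings between the $u_m$'s is summable rather than growing. A secondary subtlety is that the $m=1$ bound must be fed back into the threshold on $\sigma$, which is where the constant $C$ acquires its joint dependence on $\Vert K\Vert_\infty$ and $\Vert\rho_0\Vert_{L^2}$, while the output constant $C_0$ depends on $\Vert\rho_0\Vert_{L^2}$ alone.
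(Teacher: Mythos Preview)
Your strategy is essentially the paper's: derive the cumulant hierarchy, exploit the vanishing-marginal property to gain Poincar\'e dissipation, introduce the weights $u_m=\frac{m^2N^{m-1}}{(m-1)!}\Vert g_{[m]}\Vert_{L^2}$, and check that the partition-sum combinatorics are compatible with these weights. The paper carries this out via Duhamel's formula on the Fourier side (so that the divergence in $H_k,S_{k,l}$ is cancelled exactly by $|\nabla_k|^{-1}$, giving the clean operator bound of Lemma~\ref{BoundOp}), whereas you propose testing against $g_{[m]}$ and absorbing gradients by Young's inequality. Both routes lead to the same algebraic inequalities for the $u_m$'s, and the combinatorial core---that $\sum_s m^2/((s+1)^2(m-s)^2)$ is bounded uniformly in $m$---is identical.

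There is, however, a genuine gap in your closing step. You assert that the estimates ``collapse into a differential inequality of the form $\frac{d}{dt}U(t)\le -\lambda U(t)+R$'' with $R$ constant, and then invoke Gronwall. This is not correct: the partition sums in the hierarchy contain terms such as $H_k g_{W\cup\{k\}}\,g_{[m]\cup\{*\}-W-\{k\}}$ with $1\le|W|\le m-2$, in which \emph{both} factors carry at least two particles. After reweighting these produce contributions of order $u_{|W|+1}\,u_{m-|W|}\le U^2$, not a constant. The inequality you can actually derive is of the shape $\frac{d}{dt}U\le -\lambda U+\tfrac{c}{\sigma}(U+U^2)+R$, and plain Gronwall from $U(0)=0$ does not close this; one needs a continuity/bootstrap argument to ensure $U$ never exceeds a threshold on which the quadratic term can be absorbed. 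The paper makes exactly this step explicit via the Abstract Bootstrap Principle (Proposition~\ref{Bootstrap}): assume $\gamma_m\le 1$ for all $m$ on $[0,t]$, show this forces $\gamma_m\le\tfrac12$, and conclude. Once you replace ``Gronwall'' by this bootstrap the argument goes through; but as written the proposal is missing the mechanism that prevents the bilinear cascade from running away.

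A secondary remark: taking a pointwise-in-$t$ supremum over $m$ and differentiating $U(t)$ is delicate. The paper sidesteps this by working with $\sup_{s\le t}$ of each $u_m$ individually inside the bootstrap, which is what you should also do.
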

\begin{rmk}
    If we estimate, e.g., the $L^2$ size of chaos properly, then we can derive propagation of chaos provided that the stability of McKean-Vlasov equation \eqref{McKeanVlasov} has been established, see Section \ref{Discussion}.
\end{rmk}
\begin{rmk}
    Theorem \ref{MainThm1} is essentially a result for \textbf{small initial data}, as the lower bound of $\sigma$ depends on $\rho_0$. Such assumption on $\sigma$ appears in \cite{LL23} as well as in Section 8 of \cite{D21}, and is stronger than that in \cite{BD24}.
\end{rmk}
To overcome the drawback mentioned above, we have to consider some framework weaker that $L^1$ since we have the apparent bound $\Vert\rho_0\Vert_1=1$ and thus we can replace the constant $C_0$ in \ref{MainThm1} by some universal constant, say $10$. See Section \ref{MainProof1} and Section \ref{MainProof2}.\\ Recall that propagation of chaos means that for any $j$, for any time $t$, the $j$-marginal of the joint distribution converges weakly to $\rho^{\otimes j}(t)$. As its name suggests, converging weakly is weaker than converging in $L^1$ norm, thus we need to find some norm compatible to weak convergence. There are several equivalent conditions for weak convergence of probability measures on $\mathbb{R}^D$ or $\mathbb{T}^D$. By definition, we say a sequence of probability measures $\mu_n$ on $\mathbb{D}=\mathbb{R}^D,\mathbb{T}^D$ converges weakly to some $\mu\in\mathcal{P}(\mathbb{D})$ if for any $\varphi\in C_c(\mathbb{D})$,
\begin{align}\label{weak}
    \int\varphi\md \mu_n\to\int\varphi\md \mu,\;n\to\infty.
\end{align}
When we take the domain $\mathbb{D}$ to be the torus $\mathbb{T}^D$, by Stone-Weierstrass theorem, the trigonometric polynomials, $\mathrm{span}\{e^{2\pi ik\cdot x};k\in\mathbb{Z}^D\}$, are dense in $C_c(\mathbb{T}^D)$. Thus we only need to check \eqref{weak} for every $\varphi\in\mathrm{span}\{e^{2\pi ik\cdot x};k\in\mathbb{Z}^D\}$. In other words, weak convergence is equivalent to pointwise convergence of Fourier modes for probability distributions on the torus $\mathbb{T}^D$. Thus the norm we ought to study is some weighted $l^\infty$ norm of Fourier coefficients.\\
We consider the original $l^\infty$ norm in this paper. For a measurable function $h$ on $\mathbb{T}^D$, define
\[\Vert h\Vert_{\hat{l}^\infty}:=\Vert\hat{h}\Vert_{l^\infty}=\sup_{k\in\mathbb{Z}^D}\vert\hat{h}(k)\vert,\]
and our second result concerns on the size of chaos under $\hat{l}^\infty$ norm.
\begin{thm}\label{MainThm2}
    Assume that the kernel $K$ satisfies that the Fourier modes of $K(x,y)$ is summable, i.e.
    \[\Vert\hat{K}\Vert_{l^1}:=\sum_{\eta,\lambda}\vert\hat{K}(\eta,\lambda)\vert<+\infty\] 
    and that $\rho_0\in L^2(\mathbb{T}^d)$. With the same notations as in Theorem \ref{MainThm1}, there exists a constant $C=C(K)$ such that, if $\sigma>C$, then for all $N$ the following quantitative estimate of the $\hat{l}^\infty$ size of chaos
    \[\Vert g_{[m],N}(t)\Vert_{\hat{l}^\infty(\mathbb{T}^{dm})}\le\frac{2(m-1)!}{m^2N^{m-1}}\]
    holds for all $m=1,\dots,N$ and all $t\ge0$.
\end{thm}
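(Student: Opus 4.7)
The plan is to repeat the overall scheme used for Theorem \ref{MainThm1} entirely on the Fourier side, so that the hypothesis $\Vert\hat K\Vert_{l^1}<\infty$ replaces the $L^\infty$ control on $K$ and the trivial inequality $\Vert\rho_0\Vert_{\hat l^\infty}\le\Vert\rho_0\Vert_{L^1}=1$ removes any dependence on the initial data. The first ingredient is the hierarchy satisfied by the correlation functions $g_{[m],N}$: substituting $f_{[j],N}=\sum_{\pi\vdash[j]}\prod_{P\in\pi}g_{P,N}$ into \eqref{BBGKY} and performing the combinatorial inversion yields, for every $m\ge 1$, a closed evolution
\begin{align}
\partial_tg_{[m],N}-\sigma\sum_{k=1}^m\Delta_{x_k}g_{[m],N}=\mathcal{T}^{\mathrm{self}}_m+\mathcal{T}^{\mathrm{clos}}_m+\mathcal{T}^{\mathrm{split}}_m,\notag
\end{align}
where $\mathcal{T}^{\mathrm{self}}_m$ collects transport in $g_{[m],N}$ itself (including the drift by $g_{[1],N}=\rho$), $\mathcal{T}^{\mathrm{clos}}_m$ contains the closure contribution $\tfrac{N-m}{N}\sum_{k=1}^mH_kg_{[m]\cup\{*\},N}$, and $\mathcal{T}^{\mathrm{split}}_m$ is a finite sum of binary splittings $\tfrac{1}{N}S_{k,l}(g_{P,N}g_{Q,N})$ and $\tfrac{N-m}{N}H_k(g_{P,N}g_{Q,N})$ indexed by partitions $P\sqcup Q\subset[m]\cup\{*\}$. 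On the Fourier side the Laplacian becomes $-\sigma\sum_{i=1}^m|k_i|^2$ and each $S_{k,l}$ or $H_k$ becomes a convolution against $\hat K$ composed with a momentum factor $ik_k$ from the divergence, giving Duhamel
\begin{align}
\hat g_{[m],N}(t)=e^{-\sigma\sum_{i=1}^m|k_i|^2t}\hat g_{[m],N}(0)+\int_0^t e^{-\sigma\sum_{i=1}^m|k_i|^2(t-s)}\widehat{\mathrm{RHS}}(s)\md s\notag
\end{align}
for every $(k_1,\dots,k_m)\in(\mathbb{Z}^d)^m$.

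The two pointwise-in-frequency inputs are Young's inequality $\Vert\hat K\ast\hat h\Vert_{l^\infty}\le\Vert\hat K\Vert_{l^1}\Vert\hat h\Vert_{l^\infty}$, which transfers $\hat l^\infty$-control through each $S_{k,l}$ or $H_k$ at the cost of one factor $\Vert\hat K\Vert_{l^1}$, and the heat-kernel smoothing $\sup_{k\in\mathbb{Z}^d}|k|e^{-\sigma|k|^2(t-s)}\lesssim(\sigma(t-s))^{-1/2}$, which absorbs the derivative factor $|k_k|$. Because the divergence structure of the RHS forces the zero mode $\hat g_{[m],N}(0,\dots,0)$ to be conserved, and equals $0$ for $m\ge 2$ by the vanishing of $\sum_{\pi\vdash[m]}(-1)^{|\pi|-1}(|\pi|-1)!$, one may restrict to frequencies with $\sum_i|k_i|^2\ge 1$; the resulting integral $\int_0^t|k|e^{-\sigma|k|^2(t-s)}\md s$ is then dominated by $C/\sqrt\sigma$ uniformly in $t$, producing an effective constant $\Vert\hat K\Vert_{l^1}/\sqrt\sigma$ in each term. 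Chaoticity gives $g_{[m],N}(0)=0$ for $m\ge 2$ and $\Vert g_{[1],N}(0)\Vert_{\hat l^\infty}=\Vert\rho_0\Vert_{\hat l^\infty}\le 1$. Introducing
\begin{align}
A_m(T):=\sup_{0\le t\le T}\frac{m^2N^{m-1}}{2(m-1)!}\Vert g_{[m],N}(t)\Vert_{\hat l^\infty(\mathbb{T}^{dm})},\qquad A(T):=\sup_{m\ge1}A_m(T),\notag
\end{align}
the Duhamel bound produces a schematic inequality $A_m(T)\le A_m(0)+\tfrac{C(K)}{\sqrt\sigma}\mathcal{P}(A(T))$ uniform in $m$ and $N$, with $\mathcal{P}$ a polynomial of degree at most $2$ and $A(0)\le\tfrac12$. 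Choosing $\sigma$ so that $\tfrac{C(K)}{\sqrt\sigma}\mathcal{P}(1)<\tfrac12$ and running a standard continuity-in-$T$ bootstrap then yields $A(T)\le1$ for all $T\ge0$ and all $N$, which is the claimed bound.

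The main obstacle is the combinatorial bookkeeping inside $\mathcal{T}^{\mathrm{split}}_m$: one has to verify that for every partition $P\sqcup Q$ appearing in the expansion, the identity $\Vert g_{P,N}g_{Q,N}\Vert_{\hat l^\infty}=\Vert g_{P,N}\Vert_{\hat l^\infty}\Vert g_{Q,N}\Vert_{\hat l^\infty}$ (valid because $g_{P,N}g_{Q,N}$ is a tensor product over the disjoint variable sets $P$ and $Q$) combined with the induction bound
\[\Vert g_{P,N}\Vert_{\hat l^\infty}\Vert g_{Q,N}\Vert_{\hat l^\infty}\le\frac{2(|P|-1)!}{|P|^2N^{|P|-1}}\cdot\frac{2(|Q|-1)!}{|Q|^2N^{|Q|-1}},\]
the $\tfrac{1}{N}$- and $\tfrac{N-m}{N}$-prefactors, and the count of admissible partitions, produces after reweighting by $\frac{m^2N^{m-1}}{2(m-1)!}$ a contribution bounded uniformly in $m$ by a constant depending only on $\Vert\hat K\Vert_{l^1}$. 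This is precisely the bookkeeping that makes the weight $\frac{m^2}{(m-1)!}$ natural; once it is in place, the argument never sees $\rho_0$ beyond $\Vert\rho_0\Vert_{\hat l^\infty}\le 1$, which is what permits the threshold on $\sigma$ to depend only on $K$ and the universal constant $2$ to appear in the final bound.
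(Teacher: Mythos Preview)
Your overall scheme---Fourier side, Young's inequality for the $\hat K$-convolution, Duhamel, bootstrap with the weight $\frac{m^2N^{m-1}}{(m-1)!}$---matches the paper's. But the Duhamel step as you describe it does not close, for two linked reasons.

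First, the smoothing bound $|k|\,e^{-\sigma|k|^2(t-s)}\lesssim(\sigma(t-s))^{-1/2}$ is useless here: integrating it in $s$ gives $2\sqrt{t}/\sqrt{\sigma}$, which is \emph{not} uniform in $t$. One must integrate the exponential first, obtaining
\[
\int_0^t|2\pi\xi_k|\,e^{-\sigma(t-s)(2\pi)^2\sum_{l}|\xi_l|^2}\,\md s\;\le\;\frac{|2\pi\xi_k|}{\sigma(2\pi)^2\sum_{l}|\xi_l|^2}.
\]
Summing the $m$ divergence indices and applying Cauchy--Schwarz to $\sum_k|\xi_k|\le\sqrt{m}\,(\sum_l|\xi_l|^2)^{1/2}$ then yields the Duhamel factor $\dfrac{\sqrt{m}}{2\pi\sigma\,(\sum_l|\xi_l|^2)^{1/2}}$.

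Second---and this is the real gap---your restriction ``$\sum_i|k_i|^2\ge 1$'' (justified only by the vanishing of the \emph{all}-zero mode) leaves the Duhamel factor at best $\sqrt{m}/(2\pi\sigma)$. This $\sqrt{m}$ propagates into the bootstrap as $A_m(T)\lesssim\tfrac{\sqrt{m}}{\sigma}\,\mathcal P(A(T))$, and no choice of $\sigma$ makes this $\le 1$ uniformly in $m$. The paper's cure is the much stronger cancellation
\[
\int_{\mathbb T^d} g_{[m]}(x_1,\dots,x_m)\,\md x_l=0\qquad\text{for \emph{every} }l\in[m],
\]
which follows directly from the cluster expansion and the marginal compatibility $\int f_P\,\md x_l=f_{P\setminus\{l\}}$. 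Equivalently, $\hat g_{[m]}(\xi)=0$ whenever \emph{any single coordinate} $\xi_l$ vanishes. On the support of $\hat g_{[m]}$ one therefore has $|\xi_l|\ge 1$ for all $l$, hence $\sum_l|\xi_l|^2\ge m$, and the Duhamel factor collapses to $\frac{\sqrt m}{2\pi\sigma\sqrt m}=\frac{1}{2\pi\sigma}$, uniform in $m$. This per-coordinate vanishing is the structural ingredient your sketch omits; without it the bootstrap cannot close.

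A smaller point: the hierarchy \eqref{gHierarchy} contains a trilinear term $\tfrac{1}{N}H_k\,g_{W\cup\{k\}}\,g_{R\cup\{*\}}\,g_{[m]-R-W-\{k\}}$, so your $\mathcal P$ has degree three, not two; this is harmless once the $m$-uniformity above is repaired.
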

With such kind of size of chaos as in Theorem \ref{MainThm2}, we may prove a central limit theorem for empirical measure of the system \eqref{ParticleSystem}. Propagation of chaos implies that $X_i$'s are ``almost'' independent, so we may expect that they behaves like i.i.d. random variables. In particular, some central limit theorem would hold. Let
\[\mu_N^t:=\frac{1}{N}\sum_{i=1}^N\delta_{X_i^t}\]
be the empirical measure, then CLT means that for fixed test function $\phi$, the distributions of random variables
\[\int\phi\md \mu_N^t\]
approximates to some normal distribution. 
\begin{cor}[Uniform-in-Time CLT]\label{CLT}
    Under the assumption of Theorem \ref{MainThm2}, there exists a constant $C=C(K)$ such that for $\sigma>C$, for any test function $\phi\in C^\infty( \mathbb{T}^d)$, for any time $t$ such that the quantity
    \[\int\phi^2\md\rho_t-\left(\int\phi\md\rho_t\right)^2+\int\phi\otimes\phi\md b_t\]
    does not vanish, where $\rho_t$ is solution to the McKean-Vlasov equation \eqref{McKeanVlasov} with initial data $\rho_0$ and $b_t$ is defined in Section \ref{Discussion}, we have
    \begin{itemize}
        \item [i)~](Central Limit Theorem)There exists some constant $C_1$ depending on $\phi$ such that the random variable
        \[Y_N:=\frac{1}{\sigma_{\phi,N}}\Bigg(\int\phi\md\mu_N^t-\mathbb{E}\bigg(\int\phi\md\mu_N^t\bigg)\Bigg)\]
        satisfies
        \[\sup_{x\in\mathbb{R}}\vert\mathbb{P}(Y_N\le x)-\mathbb{P}(Z\le x)\vert\le\frac{C_1}{N^{1/7}},\]
        for $N$ large enough (independent on $t$), where $\sigma^2_{\phi,N}=Var\left(\int\phi\md\mu_N^t\right)$ and $Z\sim\mathcal{N}(0,1)$.
        \item [ii)~](Convergence of Variance)As $N\to\infty$, the variance $N\sigma^2_{\phi,N}$ converges to
        \[\int\phi^2\md\rho_t-\left(\int\phi\md\rho_t\right)^2+\int\phi\otimes\phi\md b_t\]
        which we mentioned above, uniformly in $t$.
    \end{itemize}
\end{cor}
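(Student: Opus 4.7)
I would approach both parts of Corollary \ref{CLT} by a cumulant/characteristic-function argument, with Theorem \ref{MainThm2} providing the uniform-in-time control of cumulants. The bridge between the two is the moment-cumulant formula applied to \emph{distinct} particles: comparing
\[\kappa\bigl(\phi(X_{i_1}^t),\dots,\phi(X_{i_k}^t)\bigr)=\sum_{\pi\vdash[k]}(-1)^{|\pi|-1}(|\pi|-1)!\prod_{B\in\pi}\E\prod_{j\in B}\phi(X_{i_j}^t)\]
with the expansion $f_{[k],N}=\sum_{\pi\vdash[k]}\prod_{B\in\pi}g_{B,N}$ yields
\[\kappa\bigl(\phi(X_{i_1}^t),\dots,\phi(X_{i_k}^t)\bigr)=\int_{\mathbb{T}^{dk}}\phi^{\otimes k}\,g_{[k],N}\,\md x.\]
So the correlation functions \emph{are} the joint cumulants of the observables, and Theorem \ref{MainThm2} combined with the $\hat{l}^1$--$\hat{l}^\infty$ Fourier duality is a uniform-in-$t$ cumulant estimate.

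\textbf{Part (ii).} By exchangeability,
\[N\sigma^2_{\phi,N}=\mathrm{Var}\bigl(\phi(X_1^t)\bigr)+(N-1)\,\mathrm{Cov}\bigl(\phi(X_1^t),\phi(X_2^t)\bigr),\]
and the decomposition $f_{[2],N}=f_{[1],N}^{\otimes 2}+g_{[2],N}$ turns the covariance into $\int\phi\otimes\phi\,\md g_{[2],N}$. Pairing the $m=1,2$ cases of Theorem \ref{MainThm2} against $\phi$ and $\phi\otimes\phi$ through the duality yields $\hat{l}^\infty$-boundedness of $f_{[1],N}$ and $(N-1)g_{[2],N}$ uniformly in $t$; the BBGKY-derived limiting equation for $(N-1)g_{[2],N}$ (carried out in Section \ref{Discussion} where $b_t$ is defined) upgrades this to uniform-in-$t$ convergence, which is the claim.

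\textbf{Part (i).} For $S_N:=\sum_i\phi(X_i^t)$, I would expand
\[\kappa_k(S_N)=\sum_{i_1,\dots,i_k=1}^N\kappa\bigl(\phi(X_{i_1}^t),\dots,\phi(X_{i_k}^t)\bigr)\]
and split by the partition of $[k]$ induced by index repetitions. The all-distinct piece contributes $\binom{N}{k}k!\int\phi^{\otimes k}g_{[k],N}\md x$, bounded by $2N(k-1)!\|\hat\phi\|_{l^1}^k/k$ via Theorem \ref{MainThm2} and Fourier duality; a recursive use of the moment-cumulant identity rewrites the repeated-index contributions as polynomials in the $g_{[l],N}$'s with $l<k$, and a term count shows they are dominated by the all-distinct piece. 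Dividing by $\sqrt{\mathrm{Var}(S_N)}=N\sigma_{\phi,N}$ gives $|\kappa_k(Y_N)|\le C(k-1)!\|\hat\phi\|_{l^1}^k/N^{k/2-1}$ for $k\ge 3$, uniformly in $t$. Summing the Taylor expansion of $\log\E[e^{itY_N}]$ then yields
\[\bigl|\E[e^{itY_N}]-e^{-t^2/2}\bigr|\le Ce^{-t^2/2}\,|t|^3/\sqrt N\qquad\text{for }|t|\le cN^{1/6},\]
and Esseen's smoothing inequality with a cutoff $T$ controls the Kolmogorov distance by the sum $C/\sqrt N+C/T+O(|N\sigma^2_{\phi,N}-c|)$, the last term accounting for the discrepancy between $\sigma_{\phi,N}$ and its limit from Part (ii). Balancing all three error sources against the validity range of the cumulant expansion and the rate of variance convergence produces the stated rate $N^{-1/7}$.

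\textbf{Main obstacle.} The delicate step is the uniform-in-$k$ control of the repeated-index contributions to $\kappa_k(S_N)$: unlike the all-distinct piece they are not a single correlation function but polynomials in the $g_{[l],N}$'s ($l<k$) with coefficients coming from sums over set partitions of $[k]$. Bookkeeping these combinatorial factors carefully enough that the resulting cumulant series still has a validity range of order $\sqrt N$ in $|t|$---the gap between the clean distinct-tuples term and the noisy remainder is precisely what sets the ultimate Berry-Esseen rate---is the main technical difficulty.
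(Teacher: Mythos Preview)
Your approach to Part (ii) is essentially the paper's: compute $\sigma^2_{\phi,N}$ by exchangeability, rewrite it in terms of $g_{1,N}$ and $g_{2,N}$, and invoke the convergences $g_{1,N}\to\rho$ and $Ng_{2,N}\to b$ (Propositions \ref{MFL} and \ref{BCorrection}) via the $\hat l^1$--$\hat l^\infty$ pairing.

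For Part (i) the paper takes a different and more packaged route. Rather than splitting $\kappa_k(S_N)$ by index repetitions, it writes $\kappa_m\bigl[\int\phi\,\md\mu_N\bigr]$ directly as a double sum over partitions $\pi\vdash[m]$ and $\rho\vdash\pi$, with an explicit combinatorial weight $K_N(\rho)$ collecting all the moment-to-cumulant and falling-factorial bookkeeping (Proposition \ref{preBound}). The single nontrivial lemma is then $|K_N(\rho)|\le m!\,N^{1-|\rho|}$ (Lemma \ref{KNBound}, proved by induction in the appendix), which combined with Theorem \ref{MainThm2} and crude partition counts yields $|\kappa_m|\le (m!)^4(8\|\hat\phi\|_{l^1})^m N^{1-m}$. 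The Kolmogorov bound then follows from the off-the-shelf cumulant Berry--Esseen theorem (Proposition \ref{BerryEssen}) with $\gamma=3$, which is where the exponent $1/(1+2\gamma)=1/7$ comes from. Your identity $\kappa(\phi(X_{i_1}),\dots,\phi(X_{i_k}))=\int\phi^{\otimes k}g_{[k],N}$ for distinct indices is correct and is morally the distinct-index piece of the paper's formula, but the paper avoids the recursive treatment of repeated indices altogether by absorbing that combinatorics into $K_N(\rho)$.

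Two points of caution in your plan. First, your asserted bound $|\kappa_k(Y_N)|\le C(k-1)!\,\|\hat\phi\|_{l^1}^k N^{1-k/2}$ is more optimistic than the paper's $(m!)^4$ growth; if it truly held it would yield a rate strictly better than $N^{-1/7}$, so either the repeated-index combinatorics costs more factorials than you anticipate (this is exactly what Lemma \ref{KNBound} quantifies) or you would be improving on the paper. Second, since $Y_N$ is normalized by the \emph{exact} $\sigma_{\phi,N}$, the rate of variance convergence does not enter the Berry--Esseen step as a separate error source; Part (ii) is only used to ensure $N\sigma^2_{\phi,N}$ is bounded below for $N$ large, so that the normalized cumulant bounds are meaningful.
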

\begin{rmk}
    In fact, for some first order systems with singular interaction, Central Limit Theorem can also be derived. Examples include the point-vertex dynamics \cite{JW18, WZZ23} and the equilibrium of Coulomb interactions \cite{S23}. Despite these results, we present a proof of the Corollary above to show that CLT for interacting particles systems would hold when the size of chaos were estimated properly.
\end{rmk}
\begin{rmk}
    To our best knowledge, there are many early works on Central Limit Theorem for interacting particle systems, such as \cite{BH77,M96,L09}, without providing convergence rate. The paper \cite{WZZ23} states that a CLT result similar to that in \cite{M96} holds when the relative entropy between the $N$-particle distribution and the $N$-tensor product of limit distribution is about $O(1/N)$, which has been justified for the point-vortex model in \cite{JW18}. In \cite{D21} and \cite{BD24}, CLT with convergence rate $O(1/N^{1/2})$ ($O(1/N^{1/3})$, resp.) was estabilshed for particle systems with smooth kernel. However, the kernels considered here are not smooth, so we can only get the rate $O(1/N^{1/7})$. 
\end{rmk}
\section{Preliminaries}\label{Preliminaries}
In this section we recall some preliminary results. Equivalent hierarchy concerning correlation functions has been derived in  \cite{HR23}. The utility of Fourier modes plays a key role in the proof of Theorem \ref{MainThm1}, so we list some basic notations here. Finally, we cite the Abstract Bootstrap Principle from \cite{T06}.
\subsection{Hierarchy for Correlation Functions}
Here and thereafter we shall omit the subscript $N$ of marginals and correlation functions. Since we can express marginals in terms of correlation functions and vice versa, we may get the evolution of $g_{[m]}$'s from BBGKY hierarchy \cite{HR23}
\begin{align}\label{gHierarchy}
    &\partial_tg_{[m]}-\sigma\sum_{k=1}^m\Delta_{x_k}g_{[m]}\notag\\=&-\frac{N-m}{N}\sum_{k=1}^mH_kg_{[m]\cup\{*\}}+\sum_{k=1}^m\sum_{W\subset[m]-\{k\}}\frac{m-1-\vert W\vert}{N}H_kg_{W\cup\{k,*\}}g_{[m]-\{k\}-W}\notag\\
    &-\frac{N-m}{N}\sum_{k=1}^m\sum_{W\subset[m]-\{k\}}H_kg_{W\cup\{k\}}g_{[m]\cup\{*\}-W-\{k\}}\notag\\
    &+\sum_{k=1}^m\sum_{W\subset[m]-\{k\}}\sum_{R\subset[m]-\{k\}-W}\frac{m-1-\vert W\vert-\vert R\vert}{N}H_kg_{W\cup\{k\}}g_{R\cup\{*\}}g_{[m]-R-W-\{k\}}\notag\\
    &-\frac{1}{N}\sum_{k,l=1}^mS_{k,l}g_{[m]}-\frac{1}{N}\sum_{k,l=1,k\ne l}^m\sum_{W\subset[m]-\{k,l\}}S_{k,l}g_{W\cup\{k\}}g_{[m]-\{k\}-W}.
\end{align}
In fact, the hierarchy \eqref{gHierarchy} is equivalent to BBGKY hierarchy \eqref{BBGKY}. The derivation of \eqref{gHierarchy} is omitted here.
\subsection{Fourier Transform on Torus}
Since all the variables $x_1,\dots,x_N$ take value on the torus $\mathbb{T}^d$, we may write any function $h_P$ in its Fourier modes
\[h_P(x_\alpha;\alpha\in P)=\sum_{\xi_\alpha\in\mathbb{Z}^d;\alpha\in P}\hat{h}_P(\xi_\alpha;\alpha\in P)e^{2\pi i\sum_{\alpha\in P}\xi_\alpha\cdot x_\alpha}.\]
Also, we may write the kernel $K$ as
\[K(x,y)=\sum_{\lambda,\eta\in\mathbb{Z}^d}\hat{K}(\lambda,\eta)e^{2\pi i(\lambda\cdot x+\eta\cdot y)}.\]
Then
\begin{align}
    &\Big(S_{k,l}h_P\Big)(x_\alpha;\alpha\in P)\notag\\
    =&\mathrm{div}_{x_k}\bigg(K(x_k,x_l)h_P(x_\alpha;\alpha\in P)\bigg)\notag\\
    =&\mathrm{div}_{x_k}\Bigg(\sum_{\lambda,\eta\in\mathbb{Z}^d}\hat{K}(\lambda,\eta)e^{2\pi i(\lambda\cdot x_k+\eta\cdot x_l)}\sum_{\xi_\alpha\in\mathbb{Z}^d;\alpha\in P}\hat{h}_P(\xi_\alpha;\alpha\in P)e^{2\pi i\sum_{\alpha\in P}\xi_\alpha\cdot x_\alpha}\Bigg)\notag\\
    =&\mathrm{div}_{x_k}\Bigg(\sum_{\xi_\alpha\in\mathbb{Z}^d;\alpha\in P}e^{2\pi i\sum_{\alpha\in P}\xi_\alpha\cdot x_\alpha}\sum_{\lambda,\eta\in\mathbb{Z}^d}\hat{K}(\lambda,\eta)\hat{h}_P(\xi_\alpha,\xi_k-\lambda,\xi_l-\eta;\alpha\in P-\{k,l\})\Bigg)\notag\\
    =&\sum_{\xi_\alpha\in\mathbb{Z}^d;\alpha\in P}e^{2\pi i\sum_{\alpha\in P}\xi_\alpha\cdot x_\alpha}(2\pi i\xi_k)\cdot\sum_{\lambda,\eta\in\mathbb{Z}^d}\hat{K}(\lambda,\eta)\hat{h}_P(\xi_\alpha,\xi_k-\lambda,\xi_l-\eta;\alpha\in P-\{k,l\})\notag
\end{align}
for $k\ne l$ and
\begin{align}
    &\Big(S_{k,k}h_P\Big)(x_\alpha;\alpha\in P)\notag\\
    =&\sum_{\xi_\alpha\in\mathbb{Z}^d;\alpha\in P}e^{2\pi i\sum_{\alpha\in P}\xi_\alpha\cdot x_\alpha}(2\pi i\xi_k)\cdot\sum_{\lambda,\eta\in\mathbb{Z}^d}\hat{K}(\lambda,\eta)\hat{h}_P(\xi_\alpha,\xi_k-\lambda-\eta;\alpha\in P-\{k\}).\notag
\end{align}
Similarly,
\begin{align}
    &\Big(H_kh_P\Big)(x_\alpha;\alpha\in P)\notag\\
    =&\mathrm{div}_{x_k}\Bigg(\int K(x_k,x_*)h(x_\alpha;\alpha\in P)\md x_*\Bigg)\notag\\
    =&\mathrm{div}_{x_k}\Bigg(\int\sum_{\lambda,\eta\in\mathbb{Z}^d}\hat{K}(\lambda,\eta)e^{2\pi i(\lambda\cdot x_k+\eta\cdot x_*)}\sum_{\xi_\alpha\in\mathbb{Z}^d;\alpha\in P}\hat{h}_P(\xi_\alpha;\alpha\in P)e^{2\pi i\sum_{\alpha\in P}\xi_\alpha\cdot x_\alpha}\md x_*\Bigg)\notag\\
    =&\mathrm{div}_{x_k}\Bigg(\sum_{\xi_\alpha;\alpha\in P-\{*\}}e^{2\pi i\sum_{\alpha\in P-\{*\}}\xi_\alpha\cdot x_\alpha}\sum_{\lambda,\eta\in\mathbb{Z}^d}\hat{K}(\lambda,\eta)\hat{h}_P(\xi_\alpha,\xi_k-\lambda,-\eta;\alpha\in P-\{*\})\Bigg)\notag\\
    =&\sum_{\xi_\alpha;\alpha\in P-\{*\}}e^{2\pi i\sum_{\alpha\in P-\{*\}}\xi_\alpha\cdot x_\alpha}(2\pi i\xi_k)\cdot\sum_{\lambda,\eta\in\mathbb{Z}^d}\hat{K}(\lambda,\eta)\hat{h}_P(\xi_\alpha,\xi_k-\lambda,-\eta;\alpha\in P-\{*\}).\notag
\end{align}
To represent the Fourier modes of $S_{k,l}h_P$ and $H_kh_P$, we may define
\[\hat{S}_{k,l}\varphi(\xi_\alpha;\alpha\in P):=\sum_{\lambda,\eta}(2\pi i\xi_k)\cdot\hat{K}(\lambda,\eta)\varphi(\xi_\alpha,\xi_k-\lambda,\xi_l-\eta;\alpha\in P-\{k,l\}),\]
\[\hat{S}_{k,k}\varphi(\xi_\alpha;\alpha\in P):=\sum_{\lambda,\eta}(2\pi i\xi_k)\cdot\hat{K}(\lambda,\eta)\varphi(\xi_\alpha,\xi_k-\lambda-\eta;\alpha\in P-\{k\}),\]
\[\hat{H}_k\varphi(\xi_\alpha;\alpha\in P-\{*\}):=\sum_{\lambda,\eta}(2\pi i\xi_k)\cdot\hat{K}(\lambda,\eta)\varphi(\xi_\alpha,\xi_k-\lambda,-\eta;\alpha\in P-\{k,*\}).\]
For any function $h_P$ satisfying
\[\int_{\mathbb{T}^d} h_P(x_\alpha;\alpha\in P)\md x_k=0\]
for some $k\in P$, we may define $\vert\nabla_k\vert^{-1}h_P$ by
\begin{equation}
    \bigg(\vert\nabla_k\vert^{-1}h_P\bigg)^\wedge(\xi_\alpha;\alpha\in P):=\left\{
    \begin{aligned}
        &\vert2\pi\xi_k\vert^{-1}\hat{h}_P(\xi_\alpha;\alpha\in P),\qquad&\xi_k\ne 0,\notag\\
        &0,\qquad&\xi_k=0.
    \end{aligned}
    \right.
\end{equation}
Then $\vert\nabla_k\vert^{-1}$ is a linear operator on certain subspace of $L^2((\mathbb{T}^{d})^P)$. In particular, we have
\begin{lemma}\label{BoundOp}
    Suppose $K\in L^\infty(\mathbb{T}^{2d};\mathbb{R}^d)$, then $\vert\nabla_k\vert^{-1}S_{k,l}$ ($\vert\nabla_k\vert^{-1}H_k$) is a bounded operator from $L^2((\mathbb{T}^{d})^P)$ to $L^2((\mathbb{T}^{d})^P)$ ($L^2((\mathbb{T}^{d})^{P-\{*\}})$, resp.) with operator norm no bigger than $\Vert K\Vert_{L^\infty}$, for any suitable index set $P$.
\end{lemma}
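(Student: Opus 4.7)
The plan is to interpret $|\nabla_k|^{-1}S_{k,l}$ and $|\nabla_k|^{-1}H_k$ as compositions of the scalar Fourier multiplier $|\nabla_k|^{-1}\,\mathrm{div}_{x_k}$ acting on a vector field, and to show separately that (a) this multiplier is an $L^2$-contraction and (b) the vector field has $L^2$ norm controlled by $\|K\|_{L^\infty}\|h_P\|_{L^2}$.

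First I would verify the mean-zero condition in $x_k$ that makes $|\nabla_k|^{-1}$ well-defined: since $S_{k,l}h_P$ and $H_kh_P$ are both $x_k$-divergences, integrating in $x_k$ over the torus gives zero, so the $\xi_k=0$ Fourier coefficient vanishes and the definition applies. Next, for a generic vector field $F:(\mathbb{T}^d)^P\to\mathbb{R}^d$ with $\int F\,\md x_k=0$, the Fourier multiplier of $|\nabla_k|^{-1}\mathrm{div}_{x_k}$ is
\[m(\xi)=\frac{2\pi i\,\xi_k}{|2\pi\xi_k|}\cdot\,,\qquad\xi_k\neq 0,\]
whose componentwise modulus is at most $1$; Plancherel in the $P$-variables then yields
\[\bigl\||\nabla_k|^{-1}\mathrm{div}_{x_k}F\bigr\|_{L^2((\mathbb{T}^d)^P)}\le\|F\|_{L^2((\mathbb{T}^d)^P)}.\]

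For $S_{k,l}$, apply this with $F(x_\alpha;\alpha\in P)=K(x_k,x_l)\,h_P(x_\alpha;\alpha\in P)$. The pointwise bound $|F|\le\|K\|_{L^\infty}|h_P|$ immediately gives $\|F\|_{L^2}\le\|K\|_{L^\infty}\|h_P\|_{L^2}$, proving the $S_{k,l}$ claim. For $H_k$, apply it with $F(x_\alpha;\alpha\in P-\{*\})=\int K(x_k,x_*)\,h_P(x_\alpha;\alpha\in P)\,\md x_*$, viewed as a function on $(\mathbb{T}^d)^{P-\{*\}}$. Then by the pointwise inequality $|F|\le\|K\|_{L^\infty}\int|h_P|\,\md x_*$ followed by Cauchy--Schwarz in $x_*$ (using that $|\mathbb{T}^d|=1$),
\[|F(x_\alpha;\alpha\in P-\{*\})|^2\le\|K\|_{L^\infty}^2\int|h_P|^2\,\md x_*,\]
and integrating over the remaining variables gives $\|F\|_{L^2((\mathbb{T}^d)^{P-\{*\}})}\le\|K\|_{L^\infty}\|h_P\|_{L^2((\mathbb{T}^d)^P)}$, yielding the second claim.

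There is no real obstacle beyond keeping the bookkeeping of variables straight; the only care needed is (i) making sure the divergence structure supplies the mean-zero condition in $x_k$ so that $|\nabla_k|^{-1}$ can be applied, and (ii) using Cauchy--Schwarz (not just $L^\infty\cdot L^1$) to pass from the integral over $x_*$ to an $L^2$ bound for $H_k$. Both steps are standard once the Fourier-side contraction property of $|\nabla_k|^{-1}\mathrm{div}_{x_k}$ is in place.
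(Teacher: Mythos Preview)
Your proposal is correct and follows essentially the same approach as the paper: both arguments use Plancherel to reduce to the observation that the Fourier multiplier $\xi_k/|\xi_k|$ has norm at most $1$, and then bound the underlying vector field in $L^2$ via the pointwise bound $|K|\le\|K\|_{L^\infty}$ (together with Cauchy--Schwarz in $x_*$ for the $H_k$ case). The only cosmetic difference is that you separate the contraction step and the pointwise-bound step explicitly, whereas the paper runs them as a single chain of inequalities on the Fourier side and treats only $H_k$, declaring $S_{k,l}$ similar.
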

\begin{proof}
    We may prove the boundedness of $\vert\nabla_k\vert^{-1}H_k$. The case of $\vert\nabla_k\vert^{-1}S_{k,l}$ is similar. For any $P$ such that $*,k\in P$ and $k\ne *$, for any $h_P\in L^2((\mathbb{T}^d)^P)$, by Plancherel's identity,
    \begin{align}
        \Vert\vert\nabla_k\vert^{-1}H_kh_P\Vert_{L^{2}}^2=&\sum_{\xi_\alpha;\alpha\in P,\xi_k\ne 0}\left\vert\frac{1}{\vert2\pi\xi_k\vert}\hat{H}_k\hat{h}_P(\xi_\alpha;\alpha\in P-\{*\})\right\vert^2\notag\\
        \le&\sum_{\xi_\alpha;\alpha\in P,\xi_k\ne 0}\left\vert\sum_{\lambda,\eta}\hat{K}(\lambda,\eta)\varphi(\xi_\alpha,\xi_k-\lambda,-\eta;\alpha\in P-\{k,*\})\right\vert^2\notag\\
        \le&\sum_{\xi_\alpha;\alpha\in P}\left\vert\sum_{\lambda,\eta}\hat{K}(\lambda,\eta)\varphi(\xi_\alpha,\xi_k-\lambda,-\eta;\alpha\in P-\{k,*\})\right\vert^2\notag\\
        =&\int_{(\mathbb{T}^d)^{P-\{*\}}}\Bigg\vert\int_{\mathbb{T}^d}K(x_k,x_*)h_P(x_\alpha;\alpha\in P)\md x_*\Bigg\vert^2\notag\\
        \le&\Vert K\Vert_{L^\infty}^2\Vert h_P\Vert_{L^2}^2.\notag
    \end{align}
\end{proof}
\subsection{Bootstrap Method}
To deal with the nontrivial dependency in \eqref{gHierarchy}, we need the so-called bootstrap principle, which can be seen as a more general version of Gronwall's inequality. Here we present such principle in the form of Proposition 1.21, \cite{T06}.
\begin{prop}[Abstract Bootstrap Principle]\label{Bootstrap}
    Let $I$ be a time interval, and for each $t\in I$ we have two statements, a ``hypothesis'' $\mathbf{H}(t)$ and a ``conclusion'' $\mathbf{C}(t)$. Suppose we can verify the following four assertions:
    \begin{itemize}
        \item [(a)~](Hypothesis implies conclusion) If $\mathbf{H}(t)$ is true for some time $t\in I$, then $\mathbf{C}(t)$ is also true for that time $t$.
        \item [(b)~](Conclusion is stronger than hypothesis) If $\mathbf{C}(t)$ is true for some $t\in I$, then $\mathbf{H}(t')$ is true for all $t'\in I$ in a neighbourhood of $t$.
        \item [(c)~](Conclusion is closed) If $t_1,t_2,\dots$ is a sequence of times in $I$ which converges to another time $t\in I$ and $\mathbf{C}(t_n)$ is true for all $n$, then $\mathbf{C}(t)$ is true.
        \item [(d)~] (Base case) $\mathbf{H}(t)$ is true for at least one time $t\in I$.
    \end{itemize}
    Then $\mathbf{C}(t)$ is true for all $t\in I$.
\end{prop}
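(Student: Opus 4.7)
The plan is to turn the four hypotheses into a standard connectedness argument on the interval $I$. Define
\[
\Omega := \{\, t \in I : \mathbf{C}(t) \text{ holds} \,\},
\]
and I would show that $\Omega$ is nonempty, relatively open in $I$, and relatively closed in $I$. Since $I$ is an interval, hence connected, this forces $\Omega = I$, which is exactly the conclusion.

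For nonemptiness, I would invoke (d) to produce a time $t_0 \in I$ at which $\mathbf{H}(t_0)$ holds, and then apply (a) to conclude $\mathbf{C}(t_0)$, so $t_0 \in \Omega$. For openness in $I$, I would take an arbitrary $t \in \Omega$; since $\mathbf{C}(t)$ holds, hypothesis (b) gives a neighborhood $U \subset I$ of $t$ on which $\mathbf{H}(t')$ holds for every $t' \in U$, and then (a) upgrades each such $\mathbf{H}(t')$ to $\mathbf{C}(t')$, showing $U \subset \Omega$. For closedness in $I$, I would take a sequence $t_n \in \Omega$ converging to some $t \in I$; by definition $\mathbf{C}(t_n)$ holds for every $n$, so assertion (c) yields $\mathbf{C}(t)$, i.e.\ $t \in \Omega$.

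Having verified these three properties, the connectedness of the interval $I$ implies $\Omega = I$, which is the desired conclusion that $\mathbf{C}(t)$ holds for every $t \in I$. There is essentially no obstacle here: the argument is a direct topological dichotomy and each of (a)--(d) is used exactly once, at the step suggested by its name. The only mild subtlety worth flagging in the write-up is that openness must be interpreted relative to $I$ (so that endpoints of $I$, if any, cause no issue), and that (b) is precisely strong enough to supply such a relative neighborhood. No additional analytic machinery or auxiliary estimates are required.
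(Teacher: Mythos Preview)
Your proposal is correct and follows exactly the same connectedness argument as the paper: define $\Omega=\{t\in I:\mathbf{C}(t)\text{ holds}\}$, use (d)+(a) for nonemptiness, (b)+(a) for openness, (c) for closedness, and conclude by connectedness of $I$. The paper's proof is essentially a one-paragraph version of what you wrote, so there is nothing to add.
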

\begin{proof}
    Let $\Omega$ be the set of times $t\in I$ for which $\mathbf{C}(t)$ holds. Properties $(d)$ and $(a)$ ensure that $\Omega$ is non-empty. Properties $(b)$ and $(a)$ ensure that $\Omega$ is open. Property $(c)$ ensures that $\Omega$ is closed. Since the interval $I$ is connected, we thus see that $\Omega=I$, and the claim follows.
\end{proof}
\subsection{Cumulants and Normal Approximation}
In the proof of CLT in Section \ref{Discussion}, it is convenient to apply the method of cumulants (\cite{DJS22}, see also \cite{SS12}). For any random variable $X$ with its characteristic function smooth enough, define its $m$-th cumulant $\kappa_m(X)$ by
\[\kappa_m(X):=(-i)^m\frac{\md^m}{\md^mt}\log\mathbb{E}e^{itX}\bigg\vert_{t=0}.\]
In fact, we can express the $m$-th cumulant in terms of the $j$-th moments with $j\le m$.
\[\kappa_m(X)=\sum_{\pi\vdash[m]}(-1)^{\vert\pi\vert-1}(\vert\pi\vert-1)!\prod_{B\in\pi}\mathbb{E}[X^{\vert B\vert}].\]
Conversely,
\[\mathbb{E}[X^j]=\sum_{\pi\vdash[j]}\prod_{B\in\pi}\kappa_{\vert B\vert}(X).\]
It is easy to verify that for all $m\ge3$, the $m$-th cumulant of standard normal distribution vanishes. So for some standardized random variable $X$(i.e., $\mathbb{E}X=0,\mathrm{Var}(X)=1$) with decaying cumulants, we may expect that the distribution of $X$ is close to $\mathcal{N}(0,1)$. To be precise, we have the following Berry-Essen type bound (Theorem 2.4 in \cite{DJS22})
\begin{prop}\label{BerryEssen}
    Suppose $\mathbb{E}X=0,\mathrm{Var}(X)=1$ and there exists $\gamma\ge0,\Delta>0$ such that
    \[\vert\kappa_m(X)\vert\le\frac{(m!)^{1+\gamma}}{\Delta^{m-2}},\;m\ge3,\]
    then there exists some constant $C>0$
    \[\sup_{x\in\mathbb{R}}\vert\mathbb{P}(X\le x)-\mathbb{P}(Z\le x)\vert\le\frac{C}{\Delta^{1/(1+2\gamma)}},\]
    where $Z\sim\mathcal{N}(0,1)$.
\end{prop}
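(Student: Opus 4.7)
The plan is to apply Esseen's smoothing inequality and control the characteristic function $\phi(t):=\mathbb{E} e^{itX}$ via the cumulant expansion of its logarithm. Esseen's inequality gives, for any $T>0$,
\[\sup_{x\in\mathbb{R}}|\mathbb{P}(X\le x)-\mathbb{P}(Z\le x)|\le\frac{1}{\pi}\int_{-T}^{T}\frac{|\phi(t)-e^{-t^2/2}|}{|t|}\,\md t+\frac{C_0}{T},\]
where $C_0$ comes from the bounded density of $\mathcal{N}(0,1)$. Once $\phi(t)-e^{-t^2/2}$ is controlled on a suitable window $[-T,T]$, optimizing $T$ will produce the Berry-Esseen type rate.

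Formally one writes $\log\phi(t)=-t^2/2+R(t)$ with $R(t):=\sum_{m\ge 3}\kappa_m(X)(it)^m/m!$, using $\kappa_1(X)=0$ and $\kappa_2(X)=1$, so $\phi(t)-e^{-t^2/2}=e^{-t^2/2}\bigl(e^{R(t)}-1\bigr)$. For $|t|$ small one expects $|R(t)|\lesssim|t|^3/\Delta$, dominated by the $\kappa_3$-term, and then $|e^{R(t)}-1|\le 2|R(t)|$ would yield $|\phi(t)-e^{-t^2/2}|\lesssim e^{-t^2/2}|t|^3/\Delta$. The question is the maximal window on which this bound can be justified. Under the hypothesis $|\kappa_m|\le(m!)^{1+\gamma}/\Delta^{m-2}$, the $m$-th term of $R(t)$ has magnitude at most $(m!)^\gamma(|t|/\Delta)^m\Delta^2$: for $\gamma=0$ this is geometrically summable for $|t|<\Delta$, giving admissible scale $T_\ast\sim\Delta$ and recovering the classical Berry-Esseen rate $\Delta^{-1}$; for $\gamma>0$ the series is only Gevrey-summable, and one must truncate it at the saddle-point index $m^\ast\sim(\Delta/|t|)^{1/\gamma}$, estimate the remainder by bounding $\phi(t)$ directly through the moment-cumulant identity
\[\mathbb{E}[X^j]=\sum_{\pi\vdash[j]}\prod_{B\in\pi}\kappa_{|B|}(X),\]
and then optimize the truncation against $|t|$. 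Careful balancing should yield admissible scale $T_\ast\sim\Delta^{1/(1+2\gamma)}$, and choosing $T=T_\ast$ in Esseen's inequality produces the claimed rate $\Delta^{-1/(1+2\gamma)}$.

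The main technical obstacle is precisely the Gevrey regime $\gamma>0$, in which the series defining $R(t)$ has radius of convergence zero, so one cannot simply exponentiate its partial sums and expect to recover $\phi$. The cleanest approach I would take is to analyse $\phi(t)$ directly from its Taylor series $\phi(t)=\sum_{j\ge 0}(it)^j\mathbb{E}[X^j]/j!$, bound the moments via the partition-sum identity above, and then isolate the contribution of Wick-type pair partitions (which reproduce $e^{-t^2/2}$) from those partitions containing at least one block of size $\ge 3$. Tracking the interplay between the factorial growth of the cumulants, the combinatorics of partition sums, and the $(it)^j/j!$ prefactors is what produces the precise exponent $1/(1+2\gamma)$, and this combinatorial-analytic bookkeeping is where the real work of the proposition lies; the $\gamma=0$ case, by contrast, reduces to the standard cumulant-based proof of Berry-Esseen.
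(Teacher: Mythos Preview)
The paper does not prove this proposition at all: it is quoted verbatim as Theorem~2.4 of \cite{DJS22} and used as a black box in Section~\ref{Discussion}. So there is no ``paper's own proof'' to compare against, and your sketch is not competing with anything in the text.

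As to the sketch itself, your outline is the standard Statulevi\v{c}ius--Rudzkis--Saulis strategy that underlies the cited result: Esseen's smoothing lemma, cumulant control of $\log\phi(t)$ on a window $|t|\le T$, and optimization in $T$. The exponent $1/(1+2\gamma)$ does indeed arise from balancing the Gevrey growth $(m!)^\gamma$ against the Gaussian factor $e^{-t^2/2}$, and the admissible scale $T_\ast\sim\Delta^{1/(1+2\gamma)}$ is correct. However, your write-up stops exactly at the hard part: in the regime $\gamma>0$ the cumulant series for $\log\phi$ diverges, so one cannot simply ``truncate at the saddle-point index'' and bound the remainder by a moment estimate without a genuine argument. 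The actual proofs in the literature (Saulis--Statulevi\v{c}ius, and the exposition in \cite{DJS22}) proceed not by truncating $\log\phi$ but by carefully Taylor-expanding $\phi$ itself to finite order, controlling the remainder via the hypothesis on $\kappa_m$ through explicit moment bounds, and only then comparing with the Gaussian; the combinatorics of matching partial Bell polynomials is nontrivial. Your proposal correctly identifies this as ``where the real work lies'' but does not carry it out, so as written it is a plausible heuristic rather than a proof.
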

\section{Proof of Theorem \ref{MainThm1}}\label{MainProof1}
Before the proof of our main theorem, we emphasize the following observation. By definition we have for any $1\le l\le m$ and any $x_s,\;1\le s\le m\;s\ne l$
\begin{align}\label{Cancellation}
    \int g_{[m]}(x_1,\dots,x_m)\md x_l=0.
\end{align}
In fact, by considering the position of $x_l$ in the product,
\begin{align}
    \int_{\mathbb{T}^d} g_{[m]}(x_1,\dots,x_m)\md x_l=&\int_{\mathbb{T}^d}\sum_{\pi\vdash[m]}(-1)^{\vert\pi\vert-1}(\vert\pi\vert-1)!\prod_{P\in\pi}f_P\md x_l\notag\\
    =&\sum_{\pi\vdash[m]-\{l\}}\bigg(\vert\pi\vert(-1)^{\vert\pi\vert-1}(\vert\pi\vert-1)!+(-1)^{\vert\pi\vert}(\vert\pi\vert)!\bigg)\prod_{P\in\pi}f_P\notag\\
    =&0,\notag
\end{align}
where we have used the definition of marginals
\[\int_{\mathbb{T}^d} f_P\md x_{l}=f_{P-\{l\}}.\]
Rewrite \eqref{gHierarchy} in Fourier modes
\begin{align}\label{FgHierarchy}
    &\partial_t\hat{g}_{[m]}+\sigma\sum_{k=1}^m(2\pi)^2\vert\xi_k\vert^2\hat{g}_{[m]}\notag\\
    =&-\frac{N-m}{N}\sum_{k=1}^m\hat{H}_k\hat{g}_{[m]\cup\{*\}}-\sum_{k=1}^m\sum_{W\subset[m]-\{k\}}\frac{m-1-\vert W\vert}{N}\hat{H}_k\hat{g}_{W\cup\{k,*\}}\hat{g}_{[m]-\{k\}-W}\notag\\
    &-\frac{N-m}{N}\sum_{k=1}^m\sum_{W\subset[m]-\{k\}}\hat{H}_k\hat{g}_{W\cup\{k\}}\hat{g}_{[m]\cup\{*\}-W-\{k\}}\notag\\
    &+\sum_{k=1}^m\sum_{W\subset[m]-\{k\}}\sum_{R\subset[m]-\{k\}-W}\frac{m-1-\vert W\vert-\vert R\vert}{N}\hat{H}_k\hat{g}_{W\cup\{k\}}\hat{g}_{R\cup\{*\}}\hat{g}_{[m]-R-W-\{k\}}\notag\\
    &-\frac{1}{N}\sum_{k,l=1}^m\hat{S}_{k,l}\hat{g}_{[m]}-\frac{1}{N}\sum_{k,l=1,k\ne l}^m\sum_{W\subset[m]-\{k,l\}}\hat{S}_{k,l}\hat{g}_{W\cup\{k\}}\hat{g}_{[m]-\{k\}-W}.
\end{align}
The identity \eqref{Cancellation} implies that for any $1\le l\le m$ and any $\xi_s,\;1\le s\le m\;s\ne l$
\[\hat{g}_m(\xi_s,0;s\ne l)=0.\]
So in \eqref{FgHierarchy} we only need to consider those modes at the frequency without any zero.\\
Notice that we have turned everything into the frequency side. In this way we can better treat \eqref{FgHierarchy} as a perturbation of the heat equation on the torus. With the perturbative point of view in mind, the proof of Theorem \ref{MainThm1} and Theorem \ref{MainThm2} are both three-folds. The first step is to estimate the forcing terms in a static way. Then we may apply Duhamel's formula to the perturbed equation. Finally some bootstrap arguments would ensure our expected size of chaos. 
\subsection{Estimates of the Forcing Term}
Abbreviate \eqref{FgHierarchy} as
\begin{align}\label{Short}
\partial_t\hat{g}_{[m]}+\sigma\sum_{k=1}^m(2\pi)^2\vert\xi_k\vert^2\hat{g}_{[m]}=\sum_{k=1}^m(2\pi i\xi_k)\cdot R_{[m]}^{(k)}.
\end{align}
Thanks to lemma \ref{BoundOp}, by Plancherel's identity,
\begin{align}
    &\bigg\Vert \frac{2\pi i\xi_k}{\vert2\pi\xi_k\vert}\cdot R_{[m]}^{(k)}(\xi_{[m]})\bigg\Vert_{l^2(\mathbb{Z}^{md})}\notag\\
    =&\Bigg\Vert-\frac{N-m}{N}\vert\nabla_k\vert^{-1}H_kg_{[m]\cup\{*\}}+\sum_{W\subset[m]-\{k\}}\frac{m-1-\vert W\vert}{N}\vert\nabla_k\vert^{-1}H_kg_{W\cup\{k,*\}}g_{[m]-\{k\}-W}\notag\\
    &-\frac{N-m}{N}\sum_{W\subset[m]-\{k\}}\vert\nabla_k\vert^{-1}H_kg_{W\cup\{k\}}g_{[m]\cup\{*\}-W-\{k\}}\notag\\
    &+\sum_{W\subset[m]-\{k\}}\sum_{R\subset[m]-\{k\}-W}\frac{m-1-\vert W\vert-\vert R\vert}{N}\vert\nabla_k\vert^{-1}H_kg_{W\cup\{k\}}g_{R\cup\{*\}}g_{[m]-R-W-\{k\}}\notag\\
    &-\frac{1}{N}\sum_{l=1}^m\vert\nabla_k\vert^{-1}S_{k,l}g_{[m]}-\frac{1}{N}\sum_{l\ne k,l=1}^m\sum_{W\subset[m]-\{k,l\}}\vert\nabla_k\vert^{-1}S_{k,l}g_{W\cup\{k\}}g_{[m]-\{k\}-W}\Bigg\Vert_{L^2(\mathbb{T}^{md})}.
\end{align}
By Minkowski's inequality, we split right hand side term by term,
\begin{align}\label{Es3.1}
    &\bigg\Vert \frac{2\pi i\xi_k}{\vert2\pi\xi_k\vert}\cdot R_{[m]}^{(k)}(\xi_{[m]})\bigg\Vert_{l^2(\mathbb{Z}^{md})}\notag\\
    \le&\Vert K\Vert_{L^\infty}\Bigg(\frac{N-m}{N}\Vert g_{[m]\cup\{*\}}\Vert_{L^2}+\sum_{W\subset[m]-\{k\}}\frac{m-1-\vert W\vert}{N}\Vert g_{W\cup\{k,*\}}\Vert_{L^2}\Vert g_{[m]-\{k\}-W}\Vert_{L^2}\notag\\
    &+\frac{N-m}{N}\sum_{W\subset[m]-\{k\}}\Vert g_{W\cup\{k\}}\Vert_{L^2}\Vert g_{[m]\cup\{*\}-W-\{k\}}\Vert_{L^2}\notag\\
    &+\sum_{W\subset[m]-\{k\}}\sum_{R\subset[m]-\{k\}-W}\frac{m-1-\vert W\vert-\vert R\vert}{N}\Vert g_{W\cup\{k\}}\Vert_{L^2}\Vert g_{R\cup\{*\}}\Vert_{L^2}\Vert g_{[m]-R-W-\{k\}}\Vert_{L^2}\notag\\
    &+\frac{m}{N}\Vert g_{[m]}\Vert_{L^2}+\frac{1}{N}\sum_{l\ne k,l=1}^m\sum_{W\subset [m]-\{k,l\}}\Vert g_{W\cup\{k\}}\Vert_{L^2}\Vert g_{[m]-\{k\}-W}\Vert_{L^2}\Bigg),
\end{align}
where we let $\frac{2\pi i\xi_k}{\vert2\pi\xi_k\vert}\cdot R_{[m]}^{(k)}(\xi_{[m]})$ to be zero when $\xi_k=0$.\\
Moreover, by exchageability, all the
\[\bigg\Vert \frac{2\pi i\xi_k}{\vert2\pi\xi_k\vert}\cdot R_{[m]}^{(k)}(\xi_{[m]})\bigg\Vert_{l^2(\mathbb{Z}^{md})}\]
are equal for $k=1,\dots,m$.
\subsection{Duhamel's Formula}
By Duhamel's principle, Equation \eqref{Short} becomes
\[\hat{g}^t_{[m]}-\exp{\left\{-\sigma t\sum_{k=1}^m(2\pi)^2\vert\xi_k\vert^2\right\}}\hat{g}_{[m]}^0=\int_0^t\md s\exp{\left\{-\sigma (t-s)\sum_{k=1}^m(2\pi)^2\vert\xi_k\vert^2\right\}}\sum_{k=1}^m(2\pi i\xi_k)\cdot R^{(k)}_{[m]}.\]
Via Plancherel's identity and Cauchy-Schwarz inequality, we have
\begin{align}
    &\left\Vert g^t_{[m]}-e^{\sigma t\Delta_{[m]}}g_{[m]}^0\right\Vert_{L^2}^2\notag\\
    =&\sum_{\xi_k;k\in[m]}\left\vert\int_0^t\md s\exp{\left\{-\sigma (t-s)\sum_{l=1}^m(2\pi)^2\vert\xi_l\vert^2\right\}}\sum_{l=1}^m(2\pi i\xi_l)\cdot R^{(l)}_{[m]}(s,\xi_{[m]})\right\vert^2\notag\\
    =&\sum_{\xi_k;k\in[m]}\left\vert\int_0^t\md s\exp{\left\{-\frac{\sigma}{2} (t-s)\sum_{l=1}^m(2\pi)^2\vert\xi_l\vert^2\right\}}\left(\sigma\sum_{l=1}^m(2\pi)^2\vert\xi_l\vert^2\right)^{1/2}\right.\notag\\
    &\left.\cdot\exp{\left\{-\frac{\sigma}{2} (t-s)\sum_{l=1}^m(2\pi)^2\vert\xi_l\vert^2\right\}}\left(\sigma\sum_{l=1}^m(2\pi)^2\vert\xi_l\vert^2\right)^{-1/2}\sum_{l=1}^m(2\pi i\xi_l)\cdot  R^{(l)}_{[m]}(s,\xi_{[m]})\right\vert^2\notag\\
    \le&\sum_{\xi_k;k\in[m]}\int_0^t\md s\exp{\left\{-\sigma (t-s)\sum_{l=1}^m(2\pi)^2\vert\xi_l\vert^2\right\}}\left(\sigma\sum_{l=1}^m(2\pi)^2\vert\xi_l\vert^2\right)\notag\\
    &\cdot\int_0^t\md s\exp{\left\{-\sigma(t-s)\sum_{l=1}^m(2\pi)^2\vert\xi_l\vert^2\right\}}\left(\sigma\sum_{l=1}^m(2\pi)^2\vert\xi_l\vert^2\right)^{-1}\left\vert\sum_{l=1}^m(2\pi i\xi_l)\cdot R^{(l)}_{[m]}(s,\xi_{[m]})\right\vert^2.
\end{align}
Notice the fact that
\[\int_0^tCe^{-Cs}\md s\le 1,\forall t>0\]
and that $\xi_l\ne 0$ for all $l=1,2,\dots,m,$
\begin{align}
    &\left\Vert g^t_{[m]}-e^{\sigma t\Delta_{[m]}}g_{[m]}^0\right\Vert_{L^2}^2\notag\\
    \le&\int_0^t\md se^{-\sigma(2\pi)^2 m(t-s)}\sum_{\xi_k;k\in[m]}\left(\sigma\sum_{l=1}^m(2\pi)^2\vert\xi_l\vert^2\right)^{-1}\left\vert\sum_{l=1}^m\vert2\pi\xi_l\vert\frac{2\pi i\xi_l}{\vert2\pi\xi_l\vert}\cdot R^{(l)}_{[m]}(s,\xi_{[m]})\right\vert^2\notag\\
    \le&\frac{1}{\sigma}\int_0^t\md se^{-\sigma(2\pi)^2 m(t-s)}\sum_{\xi_k;k\in[m]}\sum_{l=1}^m\bigg\vert\frac{2\pi i\xi_l}{\vert2\pi\xi_l\vert}\cdot R_{[m]}^{(l)}(s,\xi_{[m]})\bigg\vert^2.
\end{align}
Again by the fact that
\[\int_0^tCe^{-Cs}\md s\le 1,\forall t>0,\]
we have
\begin{align}\label{Es3.2}
   &\left\Vert g^t_{[m]}-e^{\sigma t\Delta_{[m]}}g_{[m]}^0\right\Vert_{L^2}^2\notag\\
    \le&\frac{1}{m(2\pi\sigma)^2}\sum_{l=1}^m\sup_{0\le s\le t}\sum_{\xi_k;k\in[m]}\bigg\vert\frac{2\pi i\xi_l}{\vert2\pi\xi_l\vert}\cdot R_{[m]}^{(l)}(s,\xi_{[m]})\bigg\vert^2.
\end{align}

By symmetry, we may further deduce that
\begin{align}\label{Es3.3}
\left\Vert g^t_{[m]}-e^{\sigma t\Delta_{[m]}}g_{[m]}^0\right\Vert_{L^2(\mathbb{T}^{md})}\le\frac{1}{2\pi\sigma}\sup_{0\le s\le t}\bigg\Vert\frac{2\pi i\xi_1}{\vert2\pi\xi_1\vert}\cdot R_{[m]}^{(1)}(s)\bigg\Vert_{l^2(\mathbb{Z}^{md})}.
\end{align}
\subsection{Bootstrap}
Fix some constant $C_0>0$. Set $\mathbf{H}(t)$
\[\left\Vert g^s_{[m]}-e^{\sigma s\Delta_{[m]}}g_{[m]}^0\right\Vert_{L^2}\le \frac{C_0(m-1)!}{m^2N^{m-1}}\]
for all $s\le t$ and all $m=1,\dots,N$. And $\mathbf{C}(t)$
\[\left\Vert g^s_{[m]}-e^{\sigma s\Delta_{[m]}}g_{[m]}^0\right\Vert_{L^2}\le \frac{C_0(m-1)!}{2m^2N^{m-1}}\]
for all $s\le t$ and all $m=1,\dots,N$. Then the assertions $(b)(c)(d)$ of Proposition \ref{Bootstrap} can be easily checked due to the absolutely continuity of the $L^2$ norm (on the time interval $[0,T]$). Since the initial data is chaotic, we may choose $C=2\Vert \rho_0\Vert_{L^2}$ such that $\Vert e^{\sigma s\Delta_{[m]}}g_{[m]}^0\Vert_{L^2}$ satisfies the same bound as in $\mathbf{C}(t)$ for all $m=1,\dots,N$. Let
\[\gamma_m(t)=\sup_{s\le t}\left\{\frac{m^2 N^{m-1}}{C_0(m-1)!}\left\Vert g^s_{[m]}-e^{\sigma s\Delta_{[m]}}g_{[m]}^0\right\Vert_{L^2}\right\}.\]
Then
\[\mathbf{H}(t)\Leftrightarrow \forall m,\;\gamma_m(t)\le1,\]
\[\mathbf{C}(t)\Leftrightarrow \forall m,\;\gamma_m(t)\le\frac{1}{2}.\]
By Minkowski's inequality, we may infer from \eqref{Es3.1} and \eqref{Es3.3} that
\begin{align}
    &\gamma_m(t)\le\notag\\
    &\frac{m^2N^{m-1}}{C_0(m-1)!}\frac{8\Vert K\Vert_{L^\infty}}{2\pi\sigma}\Bigg(\frac{N-m}{N}\frac{C_0m!}{(m+1)^2N^m}\gamma_{m+1}(t)\notag\\
    &+\sum_{s=0}^{m-2}\frac{m-1-s}{N}\binom{m-1}{s}\frac{C_0(s+1)!}{(s+2)^2N^{s+1}}\gamma_{s+2}(t)\frac{C_0(m-2-s)!}{(m-1-s)^2N^{m-2-s}}\gamma_{m-1-s}(t)\notag\\
    &+\frac{N-m}{N}\sum_{s=0}^{m-1}\binom{m-1}{s}\frac{C_0s!}{(s+1)^2N^s}\gamma_{s+1}(t)\frac{C_0(m-1-s)!}{(m-s)^2N^{m-1-s}}\gamma_{m-s}(t)\notag\\
    &+\sum_{s=0}^{m-2}\sum_{u=0}^{m-2-s}\frac{m-1-s-u}{N}\frac{(m-1)!}{s!u!(m-1-s-u)!}\frac{C_0s!\gamma_{s+1}(t)}{(s+1)^2N^s}\frac{C_0u!\gamma_{u+1}(t)}{(u+1)^2N^u}\frac{C_0(m-2-s-u)!\gamma_{m-1-s-u}(t)}{(m-1-s-u)^2N^{m-2-s-u}}\notag\\
    &+\frac{m}{N}\frac{C_0(m-1)!}{m^2N^{m-1}}+\frac{m-1}{N}\sum_{s=0}^{m-2}\binom{m-2}{s}\frac{C_0s!}{(s+1)^2N^s}\frac{C_0(m-2-s)!}{(m-1-s)^2N^{m-2-s}}\Bigg)
\end{align}
for $m=1,2,\dots,N$. If we assume that $\mathbf{H}(t)$ holds, then
\begin{align}
    &\gamma_m(t)\le\notag\\
    &\frac{4\Vert K\Vert_{L^\infty}}{\pi\sigma}\Bigg(\frac{(N-m)m}{N^2}+\sum_{s=0}^{m-2}\frac{m^2C_0}{(s+2)^2(m-1-s)^2}+\frac{N-m}{N}\sum_{s=0}^{m-1}\frac{m^2C_0}{(s+1)^2(m-s)^2}\notag\\
    &+\sum_{s=0}^{m-2}\sum_{u=0}^{m-2-s}\frac{m^2C_0^2}{(s+1)^2(u+1)^2(m-1-s-u)^2}+\frac{m}{N}+\sum_{s=0}^{m-2}\frac{m^2C_0}{(s+1)^2(m-1-s)^2}\Bigg).
\end{align}
A little calculation implies that
\begin{align}
    &\gamma_m(t)\notag\\
    \le&\frac{4\Vert K\Vert_{L^\infty}}{\pi\sigma}\Bigg(2+C_0\sum_{s=0}^{m-2}\left(\frac{1}{s+2}+\frac{1}{m-1-s}\right)^2+C_0\sum_{s=0}^{m-1}\left(\frac{1}{s+1}+\frac{1}{m-s}\right)^2\notag\\
    &+C_0\sum_{s=0}^{m-2}\left(\frac{1}{s+1}+\frac{1}{m-1-s}\right)^2\notag\\
    &+C_0^2\sum_{s=0}^{m-2}\frac{(m+1)^2}{(s+1)^2(m-s)^2}\sum_{u=0}^{m-2-s}\left(\frac{1}{u+1}+\frac{1}{m-1-s-u}\right)^2\Bigg).\notag\\
\end{align}
Apply Cauchy-Schwarz inequality term by term,
\begin{align}
    &\gamma_m(t)\notag\\
    \le&\frac{8\Vert K\Vert_{L^\infty}}{\pi\sigma}\Bigg(1+C_0\sum_{s=0}^{m-2}\left(\frac{1}{(s+2)^2}+\frac{1}{(m-1-s)^2}\right)+C_0\sum_{s=0}^{m-1}\left(\frac{1}{(s+1)^2}+\frac{1}{(m-s)^2}\right)\notag\\
    &+C_0\sum_{s=0}^{m-2}\left(\frac{1}{(s+1)^2}+\frac{1}{(m-1-s)^2}\right)\notag\\
    &+C_0^2\sum_{s=0}^{m-2}\frac{(m+1)^2}{(s+1)^2(m-s)^2}\sum_{u=0}^{m-2-s}\left(\frac{1}{(u+1)^2}+\frac{1}{(m-1-s-u)^2}\right)\Bigg)\notag\\
    &\le\frac{8\Vert K\Vert_{L^\infty}}{\pi\sigma}\Bigg(1+3\cdot4C_0+16C_0^2\Bigg).
\end{align}
In a word, if $\sigma>6\Vert K\Vert_{L^\infty}(16C_0^2+12C_0+1)$, then $\gamma_m(t)\le1/2$ for all $m$ given $\mathbf{H}(t)$. By Proposition \ref{Bootstrap}, the conclusion $\mathbf{C}(t)$ holds for all $t$. Thanks to the choice of $C_0$, now we complete the proof of Theorem \ref{MainThm1}.
\section{Proof of Theorem \ref{MainThm2}}\label{MainProof2}
The proof of Theorem \ref{MainThm2} is parallel to that of Theorem \ref{MainThm1}. In fact, under this setting, the operators  $\vert\nabla_k\vert^{-1}S_{k,l}:\hat{l}^\infty(\mathbb{T}^{md})\mapsto\hat{l}^\infty(\mathbb{T}^{md})$ and $\vert\nabla_k\vert^{-1}H_k:\hat{l}^\infty(\mathbb{T}^{(m+1)d})\mapsto\hat{l}^\infty(\mathbb{T}^{md})$ are both bounded. For any $P$ with $k,l\in P$,
\begin{align}
    \Vert\vert\nabla_k\vert^{-1}S_{k,l}h_P\Vert_{\hat{l}^\infty}=&\sup_{\xi_\alpha;\alpha\in P,\xi_k\ne 0}\left\vert\frac{1}{\vert2\pi i\xi_k\vert}\hat{H}_k\hat{h}_P(\xi_\alpha;\alpha\in P)\right\vert\notag\\
    \le&\sup_{\xi_\alpha;\alpha\in P,\xi_k\ne0}\left\vert\sum_{\lambda,\eta}\hat{K}(\lambda,\eta)\hat{h}_P(\xi_\alpha,\xi_k-\lambda,\xi_l-\eta;\alpha\in P-\{k,l\})\right\vert\notag\\
    \le&\Vert\hat{K}\Vert_{l^1(\mathbb{Z}^{2d})}\sup_{\xi_\alpha;\alpha\in P}\vert\hat{h}_P(\xi_\alpha;\alpha\in P)\vert\notag\\
    =&\Vert\hat{K}\Vert_{l^1(\mathbb{Z}^{2d})}\Vert h_P\Vert_{\hat{l}^\infty}.
\end{align}
Similarly we have for any $P$ with $k,*\in P$,
\[\Vert\vert\nabla_k\vert^{-1}H_kh_P\Vert_{\hat{l}^\infty}\le\Vert\hat{K}\Vert_{l^1(\mathbb{Z}^{2d})}\Vert h_P\Vert_{\hat{l}^\infty}.\]
Again write the evolution of $g_{[m]}$'s as \eqref{Short}, where the remainder terms $R_{[m]}^{(k)}$'s satisfy
\begin{align}
    &\left\Vert\frac{2\pi i\xi_k}{\vert2\pi\xi_k\vert}\cdot R_{[m]}^{(k)}(\xi_{[m]})\right\Vert_{l^\infty(\mathbb{Z}^{md})}\notag\\
    =&\Bigg\Vert-\frac{N-m}{N}\vert\nabla_k\vert^{-1}H_kg_{[m]\cup\{*\}}+\sum_{W\subset[m]-\{k\}}\frac{m-1-\vert W\vert}{N}\vert\nabla_k\vert^{-1}H_kg_{W\cup\{k,*\}}g_{[m]-\{k\}-W}\notag\\
    &-\frac{N-m}{N}\sum_{W\subset[m]-\{k\}}\vert\nabla_k\vert^{-1}H_kg_{W\cup\{k\}}g_{[m]\cup\{*\}-W-\{k\}}\notag\\
    &+\sum_{W\subset[m]-\{k\}}\sum_{R\subset[m]-\{k\}-W}\frac{m-1-\vert W\vert-\vert R\vert}{N}\vert\nabla_k\vert^{-1}H_kg_{W\cup\{k\}}g_{R\cup\{*\}}g_{[m]-R-W-\{k\}}\notag\\
    &-\frac{1}{N}\sum_{l=1}^m\vert\nabla_k\vert^{-1}S_{k,l}g_{[m]}-\frac{1}{N}\sum_{l\ne k,l=1}^m\sum_{W\subset[m]-\{k,l\}}\vert\nabla_k\vert^{-1}S_{k,l}g_{W\cup\{k\}}g_{[m]-\{k\}-W}\Bigg\Vert_{\hat{l}^\infty(\mathbb{Z}^{md})}.
\end{align}
By Minkowski's inequality,
\begin{align}\label{Es4.1}
     &\left\Vert\frac{2\pi i\xi_k}{\vert2\pi\xi_k\vert}\cdot R_{[m]}^{(k)}(\xi_{[m]})\right\Vert_{l^\infty(\mathbb{Z}^{md})}\notag\\
    \le&\Vert\hat{K}\Vert_{l^1}\Bigg(\frac{N-m}{N}\Vert g_{[m]\cup\{*\}}\Vert_{\hat{l}^\infty}+\sum_{W\subset[m]-\{k\}}\frac{m-1-\vert W\vert}{N}\Vert g_{W\cup\{k,*\}}\Vert_{\hat{l}^\infty}\Vert g_{[m]-\{k\}-W}\Vert_{\hat{l}^\infty}\notag\\
    &+\frac{N-m}{N}\sum_{W\subset[m]-\{k\}}\Vert g_{W\cup\{k\}}\Vert_{\hat{l}^\infty}\Vert g_{[m]\cup\{*\}-W-\{k\}}\notag\\
    &+\sum_{W\subset[m]-\{k\}}\sum_{R\subset[m]-\{k\}-W}\frac{m-1-\vert W\vert-\vert R\vert}{N}\Vert g_{W\cup\{k\}}\Vert_{\hat{l}^\infty}\Vert g_{R\cup\{*\}}\Vert_{\hat{l}^\infty}\Vert g_{[m]-R-W-\{k\}}\Vert_{\hat{l}^\infty}\notag\\
    &+\frac{m}{N}\Vert g_{[m]}\Vert_{\hat{l}^\infty}+\frac{1}{N}\sum_{l\ne k,l=1}^m\sum_{W\subset[m]-\{k,l\}}\Vert g_{W\cup\{k\}}\Vert_{\hat{l}^\infty}\Vert g_{[m]-\{k\}-W}\Vert_{\hat{l}^\infty}\Bigg).
\end{align}
Notice that the upper bound are equal for all $k=1,\dots,m$. By Duhamel's principle,
\[\hat{g}^t_{[m]}-\exp{\left\{-\sigma t\sum_{k=1}^m(2\pi)^2\vert\xi_k\vert^2\right\}}\hat{g}_{[m]}^0=\int_0^t\md s\exp{\left\{-\sigma (t-s)\sum_{k=1}^m(2\pi)^2\vert\xi_k\vert^2\right\}}\sum_{k=1}^m(2\pi i\xi_k)\cdot R^{(k)}_{[m]}.\]
Take $\hat{l}^\infty$ norm on both sides,
\begin{align}
    &\Vert g_{[m]}^t-e^{\sigma t\Delta_{[m]}}g_{[m]}^0\Vert_{\hat{l}^\infty}\notag\\
    =&\sup_{\xi_k;k\in[m]}\Bigg\vert\int_0^t\md s\exp{\left\{-\sigma (t-s)\sum_{l=1}^m(2\pi)^2\vert\xi_l\vert^2\right\}}\sum_{l=1}^m(2\pi i\xi_l)\cdot R^{(l)}_{[m]}\Bigg\vert\notag\\
    \le&\sup_{s\le t}\left\Vert\frac{2\pi i\xi_k}{\vert2\pi\xi_k\vert}\cdot R_{[m]}^{(k)}(s,\xi_{[m]})\right\Vert_{l^\infty(\mathbb{Z}^{md})}\sup_{\xi_k;k\in[m]}\Bigg\vert\int_0^t\md s\exp{\left\{-\sigma (t-s)\sum_{l=1}^m(2\pi)^2\vert\xi_l\vert^2\right\}}\sum_{l=1}^m\vert2\pi i\xi_l\vert\Bigg\vert.
\end{align}
By Cauchy-Schwarz inequality,
\begin{align}
    &\Vert g_{[m]}^t-e^{\sigma t\Delta_{[m]}}g_{[m]}^0\Vert_{\hat{l}^\infty}\notag\\
    \le&\sup_{s\le t}\left\Vert\frac{2\pi i\xi_k}{\vert2\pi\xi_k\vert}\cdot R_{[m]}^{(k)}(s,\xi_{[m]})\right\Vert_{l^\infty(\mathbb{Z}^{md})}\notag\\
    &\cdot\sup_{\xi_k;k\in[m]}\Bigg\vert\int_0^t\md s\exp{\left\{-\sigma (t-s)\sum_{l=1}^m(2\pi)^2\vert\xi_l\vert^2\right\}}(\sum_{l=1}^m1)^{1/2}(\sum_{l=1}^m\vert2\pi\xi_l\vert^2)^{1/2}\Bigg\vert\notag\\
    \le&\frac{1}{2\pi\sigma}\sup_{s\le t}\left\Vert\frac{2\pi i\xi_k}{\vert2\pi\xi_k\vert}\cdot R_{[m]}^{(k)}(s,\xi_{[m]})\right\Vert_{l^\infty(\mathbb{Z}^{md})}.
\end{align}
Substitute the $\hat{l}^\infty$ estimate of forcing terms, and we have
\begin{align}\label{EsInf}
    &\Vert g_{[m]}^t-e^{\sigma t\Delta_{[m]}}g_{[m]}^0\Vert_{\hat{l}^\infty}\notag\\
    \le&\frac{\Vert\hat{K}\Vert_{l^1}}{2\pi\sigma}\sup_{s\le t}\Bigg(\frac{N-m}{N}\Vert g^s_{[m]\cup\{*\}}\Vert_{\hat{l}^\infty}+\sum_{W\subset[m]-\{k\}}\frac{m-1-\vert W\vert}{N}\Vert g^s_{W\cup\{k,*\}}\Vert_{\hat{l}^\infty}\Vert g^s_{[m]-\{k\}-W}\Vert_{\hat{l}^\infty}\notag\\
    &+\frac{N-m}{N}\sum_{W\subset[m]-\{k\}}\Vert g^s_{W\cup\{k\}}\Vert_{\hat{l}^\infty}\Vert g^s_{[m]\cup\{*\}-W-\{k\}}\Vert_{\hat{l}^\infty}\notag\\
    &+\sum_{W\subset[m]-\{k\}}\sum_{R\subset[m]-\{k\}-W}\frac{m-1-\vert W\vert-\vert R\vert}{N}\Vert g^s_{W\cup\{k\}}\Vert_{\hat{l}^\infty}\Vert g^s_{R\cup\{*\}}\Vert_{\hat{l}^\infty}\Vert g^s_{[m]-R-W-\{k\}}\Vert_{\hat{l}^\infty}\notag\\
    &+\frac{m}{N}\Vert g^s_{[m]}\Vert_{\hat{l}^\infty}+\frac{1}{N}\sum_{l\ne k,l=1}^m\sum_{W\subset[m]-\{k,l\}}\Vert g^s_{W\cup\{k\}}\Vert_{\hat{l}^\infty}\Vert g^s_{[m]-\{k\}-W}\Vert_{\hat{l}^\infty}\Bigg)
\end{align}
for $m=1,2,\dots,N$. Recall that $g_{[1]}^0=\rho_0$ is a probability density, thus it is non-negative and has total mass $1$. So all the Fourier modes of $g_{[1]}^0$ satisfy
\[\vert\hat{g}_{[1]}^0(k)\vert=\left\vert\int e^{2\pi ik\cdot x}\rho_0(x)\md x\right\vert\le\int\rho_0(x)\md x=1\]
for all $k\in\mathbb{Z}^d$, we may conduct the following bootstrap argument.\\
Set $\mathbf{H}(t)$
\[\left\Vert g_{[m]}^s-e^{\sigma s\Delta_{[m]}}g_{[m]}^0\right\Vert_{\hat{l}^\infty}\le\frac{2(m-1)!}{m^2N^{m-1}}\]
for all $s\le t$ and all $m=1,\dots,N$.\\
And $\mathbf{C}(t)$
\[\left\Vert g_{[m]}^s-e^{\sigma s\Delta_{[m]}}g_{[m]}^0\right\Vert_{\hat{l}^\infty}\le\frac{(m-1)!}{m^2N^{m-1}}\]
for all $s\le t$ and all $m=1,\dots,N$.\\
Since the initial data is chaotic, we have that $\Vert e^{\sigma s\Delta_{[m]}}\Vert_{\hat{l}^\infty}$ also satisfies the same bound as in $\mathbf{C}(t)$ for all $m=1,\dots,N$. Let
\[\delta_m(t)=\sup_{s\le t}\Bigg\{\frac{m^2N^{m-1}}{(m-1)!}\left\Vert g_{[m]}^s-e^{\sigma s\Delta_{[m]}}g_{[m]}^0\right\Vert_{\hat{l}^\infty}\Bigg\}.\]
Then
\[\mathbf{H}(t)\Leftrightarrow\forall m,\;\delta_m(t)\le2,\]
\[\mathbf{C}(t)\Leftrightarrow\forall m,\;\delta_m(t)\le1.\]
With these notations, the estimate \eqref{EsInf} becomes
\begin{align}
    &\delta_m(t)\le\notag\\
    &\frac{m^2N^{m-1}}{(m-1)!}\frac{8\Vert \hat{K}\Vert_{l^1}}{2\pi\sigma}\Bigg(\frac{N-m}{N}\frac{m!}{(m+1)^2N^m}\delta_{m+1}(t)\notag\\
    &+\sum_{s=0}^{m-2}\frac{m-1-s}{N}\binom{m-1}{s}\frac{(s+1)!}{(s+2)^2N^{s+1}}\delta_{s+2}(t)\frac{(m-2-s)!}{(m-1-s)^2N^{m-2-s}}\delta_{m-1-s}(t)\notag\\
    &+\frac{N-m}{N}\sum_{s=0}^{m-1}\binom{m-1}{s}\frac{s!}{(s+1)^2N^s}\delta_{s+1}(t)\frac{(m-1-s)!}{(m-s)^2N^{m-1-s}}\delta_{m-s}(t)\notag\\
    &+\sum_{s=0}^{m-2}\sum_{u=0}^{m-2-s}\frac{m-1-s-u}{N}\frac{(m-1)!}{s!u!(m-1-s-u)!}\frac{s!\delta_{s+1}(t)}{(s+1)^2N^s}\frac{u!\delta_{u+1}(t)}{(u+1)^2N^u}\frac{(m-2-s-u)!\delta_{m-1-s-u}(t)}{(m-1-s-u)^2N^{m-2-s-u}}\notag\\
    &+\frac{m}{N}\frac{(m-1)!}{m^2N^{m-1}}\delta_m(t)+\frac{m-1}{N}\sum_{s=0}^{m-2}\binom{m-2}{s}\frac{s!}{(s+1)^2N^s}\delta_{s+1}(t)\frac{(m-2-s)!}{(m-1-s)^2N^{m-2-s}}\delta_{m-1-s}(t)\Bigg).
\end{align}
If we assume $\mathbf{H}(t)$ holds, then
\begin{align}
    &\delta_m(t)\le\notag\\
    &\frac{4\Vert \hat{K}\Vert_{l^1}}{\pi\sigma}\Bigg(\frac{2(N-m)m}{N^2}+\sum_{s=0}^{m-2}\frac{2^2m^2}{(s+2)^2(m-1-s)^2}+\frac{N-m}{N}\sum_{s=0}^{m-1}\frac{2^2m^2}{(s+1)^2(m-s)^2}\notag\\
    &+\sum_{s=0}^{m-2}\sum_{u=0}^{m-2-s}\frac{2^3m^2}{(s+1)^2(u+1)^2(m-1-s-u)^2}+\frac{2m}{N}+\sum_{s=0}^{m-2}\frac{2^2m^2}{(s+1)^2(m-1-s)^2}\Bigg).\notag\\
\end{align}
A little calculation yields
\begin{align}
    &\delta_m(t)\notag\\
    \le&\frac{4\Vert \hat{K}\Vert_{l^1}}{\pi\sigma}\Bigg(2+4\sum_{s=0}^{m-2}\left(\frac{1}{s+2}+\frac{1}{m-1-s}\right)^2+4\sum_{s=0}^{m-1}\left(\frac{1}{s+1}+\frac{1}{m-s}\right)^2\notag\\
    &+4\sum_{s=0}^{m-2}\left(\frac{1}{s+1}+\frac{1}{m-1-s}\right)^2\notag\\
    &+8\sum_{s=0}^{m-2}\frac{(m+1)^2}{(s+1)^2(m-s)^2}\sum_{u=0}^{m-2-s}\left(\frac{1}{u+1}+\frac{1}{m-1-s-u}\right)^2\Bigg).
\end{align}
Apply Cauchy-Schwarz inequality,
\begin{align}
    &\delta_m(t)\le\notag\\
    &\frac{8\Vert \hat{K}\Vert_{l^1}}{\pi\sigma}\Bigg(1+4\sum_{s=0}^{m-2}\left(\frac{1}{(s+2)^2}+\frac{1}{(m-1-s)^2}\right)+4\sum_{s=0}^{m-1}\left(\frac{1}{(s+1)^2}+\frac{1}{(m-s)^2}\right)\notag\\
    &+4\sum_{s=0}^{m-2}\left(\frac{1}{(s+1)^2}+\frac{1}{(m-1-s)^2}\right)\notag\\
    &+8\sum_{s=0}^{m-2}\frac{(m+1)^2}{(s+1)^2(m-s)^2}\sum_{u=0}^{m-2-s}\left(\frac{1}{(u+1)^2}+\frac{1}{(m-1-s-u)^2}\right)\Bigg)\notag\\
    &\le\frac{8\Vert \hat{K}\Vert_{l^1}}{\pi\sigma}\Bigg(1+3\cdot4\cdot 4+16\cdot8\Bigg)\le\frac{600\Vert\hat{K}\Vert_{l^1}}{\sigma}.
\end{align}
Thus there exists a universal constant $C\le 600$ such that, if $\sigma>C\Vert\hat{K}\Vert_{l^1}$ then $\mathbf{H}(t)$ implies $\mathbf{C}(t)$, which completes the bootstrap argument. That is $\mathbf{C}(t)$ holds for all $t\ge0$. Thus
\[\Vert g_{[m]}^t\Vert_{\hat{l}^\infty}\le\frac{2(m-1)!}{m^2N^{m-1}}\]
for all $m=1,\dots,N$ and all $t\ge0$.
\section{Fluctuation around Mean-Field Limit}\label{Discussion}
We adopt the assumptions of Theorem \ref{MainThm2} in this section. The goal is to illustrate how to infer the collective behavior of the particles from the estimates of the size of chaos. That is, to study the mean-field limit as well as the fluctuation around it base on these estimates. Let us emphasize the first two equations in \eqref{gHierarchy}, namely,
\begin{align}
    \partial_tg_{1,N}-\sigma\Delta g_{1,N}=&-\frac{N-1}{N}\mathrm{div}\left(\int K(x,y)g_{1,N}(x)g_{1,N}(y)\md y\right)\notag\\&-\frac{N-1}{N}\mathrm{div}\left(\int K(x,y)g_{2,N}(x,y)\md y\right)-\frac{1}{N}\mathrm{div}\Big(K(x,x)g_{1,N}(x)\Big)
\end{align}
and
\begin{align}
    \partial_tg_{2,N}-\sigma(\Delta_x+\Delta_y)g_{2,N}=&-\frac{N-2}{N}\Bigg(\mathrm{div}_x\left(\int K(x,z)g_{1,N}(x)g_{2,N}(y,z)\md z\right)\notag\\
    &\qquad\qquad+\mathrm{div}_y\left(\int K(y,z)g_{1,N}(y)g_{2,N}(x,z)\md z\right)\Bigg)\notag\\
    &-\frac{1}{N}\Bigg(\mathrm{div}_x\left(\int K(x,z)g_{1,N}^{\otimes3}\md z\right)+\mathrm{div}_y\left(\int K(y,z)g_{1,N}^{\otimes3}\md z\right)\Bigg)\notag\\
    &-\frac{1}{N}\Bigg(\mathrm{div}_x\left(K(x,y)g_{1,N}^{\otimes2}\right)+\mathrm{div}_y\left(K(y,x)g_{1,N}^{\otimes2}\right)\Bigg)\notag\\
    &+\frac{1}{N}r_N,
\end{align}
where $r_N=O(\frac{1}{N})$ in view of Theorem \ref{MainThm2}. Due to the similarity between Equation \eqref{McKeanVlasov} and the equation for $g_{1,N}$, we may expect that $g_{1,N}(t)$ converges to $\rho_t$. Consequently, we may also expect that $Ng_{2,N}(t)$ converges to $b_t$, where $b_t(x,y)$ satisfies
\begin{align}
    &\partial_tb-\sigma(\Delta_x+\Delta_y)b(y,z)\notag\\
    =&-\mathrm{div}_x\left(\int K(x,z)\rho(x)b(y,z)\md z\right)+\mathrm{div}_y\left(\int K(y,z)\rho(y)b(x,z)\md z\right)\notag\\
    &-\mathrm{div}_x\left(\int K(x,z)\rho^{\otimes3}\md z\right)-\mathrm{div}_y\left(\int K(y,z)\rho^{\otimes3}\md z\right)\notag\\
    &-\mathrm{div}_x\left(K(x,y)\rho^{\otimes2}\right)-\mathrm{div}_y\left(K(y,x)\rho^{\otimes2}\right).
\end{align}
with $b_0=0$, at least under $\hat{l}^\infty$ norm. See \cite{B46,D21} for its origin. We shall prove these limits in Subsection \ref{Correction} first.
\subsection{Bogolyubov Correction}\label{Correction}
\begin{prop}\label{MFL}
    Under the assumption of Theorem \ref{MainThm2}, there exists a constant $C=C(K)$ such that for $\sigma>C$ we have
    \[\Vert g_{1,N}-\rho\Vert_{\hat{l}^\infty}\le\frac{C_1}{N}\]
    uniformly in $t$, where $C_1$ is a universal constant.
\end{prop}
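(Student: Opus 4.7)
The plan is to write $w_N := g_{1,N}-\rho$, derive a perturbed heat equation for $w_N$, and run the same Fourier/Duhamel/bootstrap scheme as in Section~\ref{MainProof2}, with Theorem~\ref{MainThm2} supplying the $O(1/N)$ forcing. Subtracting the McKean--Vlasov equation \eqref{McKeanVlasov} from the equation for $g_{1,N}$ displayed at the start of Section~\ref{Discussion}, splitting $\frac{N-1}{N}=1-\frac{1}{N}$ so that the ``$1$'' part exactly cancels the McKean--Vlasov drift, and using the linearization
\[
g_{1,N}(x)g_{1,N}(y)-\rho(x)\rho(y)=w_N(x)g_{1,N}(y)+\rho(x)w_N(y),
\]
one obtains
\[
\partial_t w_N-\sigma\Delta w_N
=-\frac{N-1}{N}\mathrm{div}\!\int K(x,y)\bigl[w_N(x)g_{1,N}(y)+\rho(x)w_N(y)\bigr]\md y+\mathcal{R}_N,
\]
with initial condition $w_N(0)=0$ (since $g_{1,N}(0)=\rho_0=\rho(0)$), where the remainder
\[
\mathcal{R}_N=-\frac{1}{N}\mathrm{div}\!\int K(x,y)\rho(x)\rho(y)\md y-\frac{N-1}{N}\mathrm{div}\!\int K(x,y)g_{2,N}(x,y)\md y-\frac{1}{N}\mathrm{div}\bigl(K(x,x)g_{1,N}\bigr)
\]
collects three explicitly $1/N$-small pieces.

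Next, I would pass to Fourier modes and apply the Duhamel computation that produced \eqref{EsInf}. Every right-hand side term is a divergence, so after extracting $2\pi i\xi$ and dividing by $|2\pi\xi|$, each piece is controlled in $\hat{l}^\infty$ by $\|\hat{K}\|_{l^1}$ times the $\hat{l}^\infty$ norms of its factors, exactly as the operators $|\nabla_k|^{-1}H_k$ and $|\nabla_k|^{-1}S_{k,l}$ were handled in Section~\ref{MainProof2}. Since $\|\rho\|_{\hat{l}^\infty}\le 1$ and $\|g_{1,N}\|_{\hat{l}^\infty}\le 1$ (both are probability densities), the linear-in-$w_N$ term contributes, after Duhamel, at most $\frac{C\|\hat{K}\|_{l^1}}{\sigma}\sup_{s\le t}\|w_N(s)\|_{\hat{l}^\infty}$. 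For $\mathcal{R}_N$, Theorem~\ref{MainThm2} with $m=2$ gives $\|g_{2,N}\|_{\hat{l}^\infty}\le\frac{1}{2N}$, and the elementary bound $\sum_k\bigl|\sum_{\lambda+\eta=k}\hat{K}(\lambda,\eta)\bigr|\le\|\hat{K}\|_{l^1}$ handles the $K(x,x)g_{1,N}$ piece; altogether the $\mathcal{R}_N$ contribution after Duhamel is at most $\frac{C'\|\hat{K}\|_{l^1}}{N\sigma}$.

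Finally I would apply the bootstrap framework of Proposition~\ref{Bootstrap}: for a constant $C_1$ to be chosen, let $\mathbf{H}(t)$ be $\|w_N(s)\|_{\hat{l}^\infty}\le 2C_1/N$ for all $s\le t$ and let $\mathbf{C}(t)$ be the same bound with $C_1/N$. Under $\mathbf{H}(t)$, the two estimates above combine to give
\[
\|w_N(t)\|_{\hat{l}^\infty}\le\frac{2C_1C\|\hat{K}\|_{l^1}}{N\sigma}+\frac{C'\|\hat{K}\|_{l^1}}{N\sigma},
\]
so fixing $C_1$ large enough (depending only on $C,C'$) and then requiring $\sigma>C''\|\hat{K}\|_{l^1}$ recovers $\mathbf{C}(t)$. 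The base case $w_N(0)=0$ together with continuity of $\|w_N\|_{\hat{l}^\infty}$ in time supplies the openness/closedness/base-case hypotheses of Proposition~\ref{Bootstrap}, yielding the claim for all $t\ge 0$. The main obstacle I expect is essentially book-keeping rather than any fresh analytic difficulty: verifying that after splitting off the $1/N$ factors and invoking Theorem~\ref{MainThm2} only at $m=2$, every term on the right indeed falls into one of the two buckets (linear in $w_N$, killed by large $\sigma$, versus explicitly $O(1/N)$ with constants depending only on $\|\hat{K}\|_{l^1}$), so that the threshold on $\sigma$ and the constant $C_1$ can be kept independent of the initial data and of $t$.
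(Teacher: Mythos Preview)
Your proposal is correct and follows essentially the same route as the paper: derive the evolution of $D=g_{1,N}-\rho$, pass to Fourier modes, apply Duhamel as in Section~\ref{MainProof2}, feed in the $m=2$ bound from Theorem~\ref{MainThm2} for the $g_{2,N}$ term, and close by bootstrap using $D(0)=0$. Your two-term linearization $w_N(x)g_{1,N}(y)+\rho(x)w_N(y)$ is in fact slightly cleaner than the paper's (which records an extra quadratic $D\otimes D$ contribution in the Duhamel estimate), but the difference is cosmetic; modulo a harmless sign in the first piece of $\mathcal{R}_N$, the argument goes through exactly as you outline.
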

\begin{rmk}
    Proposition \ref{MFL} actually implies propagation of chaos. For fixed $j$, we have
    \begin{align}
        \Vert f_{j,N}-\rho^{\otimes j}\Vert_{\hat{l}^\infty}=&\Vert\sum_{\pi\vdash[j]}\prod_{P\in\pi}g_{P,N}-\rho^{\otimes j}\Vert_{\hat{l}^\infty}\notag\\
        \le&\Vert g_{1,N}^{\otimes j}-\rho^{\otimes j}\Vert_{\hat{l}^\infty}+\Vert\sum_{\pi\vdash[j],\vert\pi\vert<j}\prod_{P\in\pi}g_{P,N}\Vert_{\hat{l}^\infty}\notag\\
        \le&\frac{C_j}{N}
    \end{align}
    by Theorem \ref{MainThm2} and Proposition \ref{MFL}, where $C_j$ depends only on $j$. Recall the discussions before Theorem \ref{MainThm2}, the bound guarantees propagation of chaos.
\end{rmk}
\begin{proof}
    Consider the evolution of $D^t(\xi):=\hat{g}^t_{1,N}(\xi)-\hat{\rho}_t(\xi)$, which reads
    \begin{align}
        \partial_tD+\sigma(2\pi)^2\vert\xi\vert^2D=&-\hat{H}_{1}D_{1}\hat{g}_{\{*\}}-\hat{H}_1\hat{\rho}D_{\{*\}}-\hat{H}_1D_{[1]}D_{\{*\}}\notag\\
        &-\frac{N-1}{N}\hat{H}_1\hat{g}_{[1]\cup\{*\}}-\frac{1}{N}\hat{S}_{1,1}\hat{g}_{[1]}-\frac{1}{N}\hat{H}_1\hat{g}_{[1]}\hat{g}_{\{*\}}.
    \end{align}
    Recall that $D^0=0$, by Duhamel's formula we have
    \begin{align}
    D^t=&\int_0^t\md s\exp{\Big\{-\sigma(t-s)(2\pi)^2\vert\xi\vert^2\Big\}}\Bigg(-\hat{H}_{1}D_{1}\hat{g}_{\{*\}}-\hat{H}_1\hat{\rho}D_{\{*\}}-\hat{H}_1D_{[1]}D_{\{*\}}\notag\\
        &-\frac{N-1}{N}\hat{H}_1\hat{g}_{[1]\cup\{*\}}-\frac{1}{N}\hat{S}_{1,1}\hat{g}_{[1]}-\frac{1}{N}\hat{H}_1\hat{g}_{[1]}\hat{g}_{\{*\}}\Bigg).
    \end{align}
    Argue as in Section \ref{MainProof2}, and we have
    \begin{align}
        \Vert D^t\Vert_{l^\infty}\le&\frac{\Vert K\Vert_{l^1}}{2\pi\sigma}\sup_{s\le t}\Bigg(2\Vert D^s\Vert_{l^\infty}+\Vert D^s\Vert_{l^\infty}^2+\frac{N-1}{N}\Vert g_{2,N}\Vert_{\hat{l}^\infty}+\frac{2}{N}\Bigg).
    \end{align}
    Since $\Vert g_{1,N}\Vert_{\hat{l}^\infty}=\Vert\rho\Vert_{\hat{l}^\infty}=1$. Since $D^0=0$, a bootstrap trick verifies the validity of the same constant $C(K)$ as in Theorem \ref{MainThm2}.
\end{proof}
The following proposition can be derived via similar arguments. 
\begin{prop}\label{BCorrection}
    Under the assumption of Theorem \ref{MainThm2}, there exists a constant $C=C(K)$ such that for $\sigma>C$ we have
    \[\Vert Ng_{2,N}-b\Vert_{\hat{l}^\infty}\le\frac{C_2}{N}.\]
    uniformly in $t$, where $C_2$ is a universal constant.
\end{prop}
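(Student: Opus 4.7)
The plan is to mirror the Duhamel-plus-bootstrap strategy of Proposition \ref{MFL}, applied now to the second-order discrepancy. Define the Bogolyubov error
\[ E^t := N g_{2,N}^t - b^t, \]
which vanishes at $t=0$ by the chaotic initial data and the prescription $b_0 = 0$. Multiplying the hierarchy equation for $g_{2,N}$ displayed at the beginning of Section \ref{Discussion} by $N$ and subtracting the PDE for $b$, we arrive at an evolution equation of the schematic form
\[ \partial_t E - \sigma(\Delta_x+\Delta_y)E = \mathcal{L}[E] + \mathcal{R}, \]
in which $\mathcal{L}[E]$ is a linear operator in $E$ with coefficients built from $g_{1,N}$ (obtained by linearising the bilinear drift around $\rho$ and $b$), while $\mathcal{R}$ collects every source piece in which the discrepancy $D := g_{1,N}-\rho$, the deficit $(N-2)/N - 1 = -2/N$, or the tail $r_N$ appears.

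The second step is to check that $\|\mathcal{R}\|_{\hat{l}^\infty}\le C/N$ uniformly in $t$. Telescoping,
\begin{align*}
    g_{1,N}\,(Ng_{2,N}) - \rho\,b &= g_{1,N}\,E + D\,b, \\
    g_{1,N}^{\otimes 3} - \rho^{\otimes 3} &= D\otimes g_{1,N}^{\otimes 2} + \rho\otimes D\otimes g_{1,N} + \rho^{\otimes 2}\otimes D, \\
    g_{1,N}^{\otimes 2} - \rho^{\otimes 2} &= D\otimes g_{1,N} + \rho\otimes D,
\end{align*}
each piece of $\mathcal{R}$ is either a factor of $D$ (bounded by $C_1/N$ via Proposition \ref{MFL}) against an $\hat{l}^\infty$-bounded quantity, or the small prefactor $2/N$ against a bounded quantity, or the remainder $r_N$ itself, which is $O(1/N)$ in $\hat{l}^\infty$ thanks to the estimates on $g_{3,N}$ and $g_{2,N}$ supplied by Theorem \ref{MainThm2}. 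Since $\|g_{1,N}\|_{\hat{l}^\infty},\|\rho\|_{\hat{l}^\infty}\le 1$, this yields $\|\mathcal{R}\|_{\hat{l}^\infty}\le C/N$ \emph{conditional on a uniform-in-time bound for} $\|b\|_{\hat{l}^\infty}$.

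The required a priori bound on $b$ is a short preliminary lemma, proved by applying the same Duhamel machinery directly to the linear PDE for $b$: its sources $\rho^{\otimes 3}$ and $\rho^{\otimes 2}$ are bounded by $1$ in $\hat{l}^\infty$, its drift is $\rho\,b$, linear in $b$, and the computation of Section \ref{MainProof2} produces
\[ \|b^t\|_{\hat{l}^\infty} \le \frac{\|\hat K\|_{l^1}}{2\pi\sigma}\bigl(\alpha_0\sup_{s\le t}\|b^s\|_{\hat{l}^\infty} + \beta_0\bigr), \]
which closes to a uniform bound as soon as $\sigma$ exceeds a threshold determined by $\|\hat K\|_{l^1}$. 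Armed with this, Duhamel's formula for $E$ combined with the $\hat{l}^\infty$ computation of Section \ref{MainProof2} gives
\[ \|E^t\|_{\hat{l}^\infty} \le \frac{\|\hat K\|_{l^1}}{2\pi\sigma}\Bigl(\alpha\sup_{s\le t}\|E^s\|_{\hat{l}^\infty} + \frac{\beta}{N}\Bigr), \]
and a one-step bootstrap with hypothesis $\|E^s\|_{\hat{l}^\infty}\le 2C_2/N$ and conclusion $\|E^s\|_{\hat{l}^\infty}\le C_2/N$ closes whenever $\sigma$ is chosen sufficiently large in terms of $K$, a threshold compatible with those of Theorem \ref{MainThm2} and Proposition \ref{MFL}. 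The principal obstacle is combinatorial rather than analytic: one must account for every bilinear and trilinear difference folded into $\mathcal{R}$ and verify that each carries a visible factor of $D$, of $1/N$, or of a higher-order correlation function, so that the constant $C_2$ ends up genuinely universal and independent of $t$.
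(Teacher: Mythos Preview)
Your proposal is correct and follows essentially the same approach as the paper: the paper's proof of Proposition \ref{BCorrection} is reduced to the single sentence ``derived via similar arguments'' (i.e., the Duhamel-plus-bootstrap scheme of Proposition \ref{MFL}), and your outline fleshes out precisely that scheme. Your explicit identification of the need for a preliminary uniform-in-time $\hat{l}^\infty$ bound on $b$ is a detail the paper leaves implicit but which is indeed required to control the $D\,b$ term in $\mathcal{R}$.
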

\subsection{Empirical Cumulants}
To illustrate the validity of our $\hat{l}^\infty$ framework, we shall prove CLT for the empirical measure associated with the particle dynamics \eqref{ParticleSystem} in this and the next subsection. To apply Theorem \ref{BerryEssen}, we need to estimate the empirical cumulants. In fact, the following proposition holds.
\begin{prop}\label{preBound}
    Under the assumptions of Theorem \ref{MainThm2}, there exists a constant $C=C(K)$ such that for $\sigma>C$, for any test function $\phi\in C^\infty(\mathbb{T}^d)$ and any $m=1,2,\dots$
    \[\Bigg\vert\kappa_{m}\left[\int\phi\md\mu_N^t\right]\Bigg\vert\le\frac{\left(8\Vert\hat{\phi}\Vert_{l^1}\right)^m(m!)^4}{N^{m-1}}.\]
\end{prop}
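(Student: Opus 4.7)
The plan is to reduce the problem to bounding joint cumulants of Fourier modes of the empirical measure, and then to exploit Theorem \ref{MainThm2} through a partition-lattice decomposition. First, by multilinearity of joint cumulants and the Fourier expansion $\phi(x) = \sum_\xi \hat\phi(\xi) e^{2\pi i \xi \cdot x}$, I would write
\[
\kappa_m\!\left[\int \phi \md \mu_N^t\right] = \sum_{\xi_1, \ldots, \xi_m \in \mathbb{Z}^d} \hat\phi(\xi_1) \cdots \hat\phi(\xi_m)\, \kappa\!\left(\hat\mu_N^t(-\xi_1), \ldots, \hat\mu_N^t(-\xi_m)\right).
\]
The outer $l^{1}$-type sum contributes a factor $\Vert\hat\phi\Vert_{l^1}^m$, so it suffices to bound the joint cumulant of the Fourier modes uniformly in $(\xi_1,\ldots,\xi_m)$ by something of the form $C_m/N^{m-1}$ with $C_m \le 8^m(m!)^4$.

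Next, inserting $\hat\mu_N^t(-\xi) = N^{-1}\sum_{k=1}^N e^{2\pi i \xi \cdot X_k^t}$ and using multilinearity a second time, one obtains a sum over $(k_1,\ldots,k_m)\in[N]^m$ of joint cumulants of exponentials. I would group these tuples by the set-partition $\sigma\vdash[m]$ induced by the relation $j\sim j' \iff k_j = k_{j'}$. For $|\sigma|=d$ there are $N(N-1)\cdots(N-d+1)\le N^d$ such tuples, and by exchangeability the joint cumulant depends only on $\sigma$ and $(\xi_1,\ldots,\xi_m)$; call it $\kappa_\sigma$. When $\sigma$ consists of all singletons ($d=m$), the cumulant-moment formula recovers $\kappa_\sigma=\hat g_{[m],N}(-\xi_1,\ldots,-\xi_m)$ directly from the definition of $g_{[m],N}$, so Theorem \ref{MainThm2} handles this case immediately.

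The heart of the proof is to show $|\kappa_\sigma|\le (m!)^C N^{-(d-1)}$ for every $\sigma$ with $|\sigma|=d$. Here $\kappa_\sigma$ is the joint cumulant of $Y_j = e^{2\pi i \xi_j Z_{\ell(j)}}$ where $(Z_1,\ldots,Z_d)\sim f_{[d],N}$ and $\ell:[m]\to[d]$ is the map induced by $\sigma$. Applying the cumulant-moment formula and then expanding every occurrence of $\hat f_{[k],N}$ via $f_{[k],N}=\sum_{\rho\vdash[k]}\prod_{P\in\rho}g_{P,N}$ produces a double sum that can be reindexed as a sum over bipartite ``diagrams'' linking the $\pi$-blocks of $[m]$ to subsets of $[d]$. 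A M\"obius inversion on the partition lattice, analogous to the connected-cluster expansion, shows that only diagrams which are connected across the entire $[d]$-side survive the signed cancellation, so every surviving term is a product $\prod_i\hat g_{[k_i],N}$ with $\sum_i(k_i-1)\ge d-1$. Theorem \ref{MainThm2} bounds each factor by $2(k_i-1)!/(k_i^2 N^{k_i-1})$ in $\hat l^\infty$, yielding $N^{-(d-1)}$ times a factorial constant absorbed into $(m!)^C$.

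Putting the pieces together, $N^{-m}\sum_{\sigma\vdash[m]}N^{(|\sigma|)}|\kappa_\sigma|\le 8^m(m!)^4/N^{m-1}$ after applying the Stirling-number bound on the count of partitions with $d$ blocks and absorbing the combinatorial constants. Substituting back into the opening display and pairing with the $\Vert\hat\phi\Vert_{l^1}^m$ factor completes the proof. The principal obstacle is the M\"obius argument of the third paragraph: extracting precisely the $N^{-(d-1)}$ gain requires a careful bookkeeping of which ``diagrams'' survive the signed cancellation and, especially, identifying the connected diagrams with the correct minimum sum $\sum(k_i-1)$. The remaining steps are routine factorial estimates and the fairly generous $(m!)^4$ exponent can accommodate loose combinatorial counts.
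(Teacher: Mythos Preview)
Your strategy is sound and leads to the same bound, but it is organized differently from the paper's proof, and the difference is worth spelling out.

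The paper does \emph{not} first pass to joint cumulants via multilinearity. Instead it applies the moment--cumulant formula directly to the scalar random variable $\int\phi\,\md\mu_N^t$, expands each moment as a sum over coincidence patterns $\pi\vdash[m]$ (producing falling factorials $N(N-1)\cdots$), and then expands each $f_{|\pi|,N}$ in correlation functions $g$. All of this is collapsed into a single coefficient
\[
K_N(\rho)=\sum_{\iota\ge\rho}(-1)^{|\iota|-1}(|\iota|-1)!\prod_{C\in\iota}\Big(1-\tfrac{1}{N}\Big)\cdots\Big(1-\tfrac{|C|-1}{N}\Big),
\]
and the crucial cancellation is isolated as the algebraic Lemma~\ref{KNBound}: the first $|\rho|-1$ coefficients of $K_N(\rho)$ as a polynomial in $1/N$ vanish, so $|K_N(\rho)|\le m!\,N^{1-|\rho|}$. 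This is proved by a bare-hands induction on $m$ in the appendix. Once this lemma is in place, the bound follows from Theorem~\ref{MainThm2}, Young's inequality for $\widehat{\phi^{|B|}}$, and a crude count of partitions.

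Your route uses multilinearity of joint cumulants \emph{first}, which pulls the falling factorial $N(N-1)\cdots(N-d+1)\le N^d$ outside and leaves a pure joint cumulant $\kappa_\sigma$ of functions of $(Z_1,\dots,Z_d)\sim f_{[d],N}$. Inside $\kappa_\sigma$ there are no $1/N$ corrections from particle counting, so the M\"obius cancellation on the partition lattice is \emph{exact}: disconnected $g$-products drop out completely, and every surviving term satisfies $\sum_i(k_i-1)\ge d-1$ by the tree bound for connected hypergraphs on $[d]$. This is the classical linked-cluster mechanism relating cumulants to Ursell functions, and it replaces Lemma~\ref{KNBound} entirely. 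What you gain is conceptual clarity (the cancellation is structural, not a polynomial identity); what the paper gains is a completely explicit constant and a proof that does not invoke any general diagrammatic machinery. Your acknowledged ``principal obstacle'' is real but standard: one fixes the multiset of $g$-factors with their Fourier arguments, observes that the compatible partitions $\pi\vdash[m]$ in the cumulant formula range over all coarsenings of a fixed partition of $[m]$ determined by that data, and then the signed sum $\sum_{\iota\ge\tau}(-1)^{|\iota|-1}(|\iota|-1)!$ vanishes unless $|\tau|=1$, which is exactly connectedness. The remaining factorial bookkeeping is, as you say, comfortably absorbed by $(m!)^4$.
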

\begin{proof}
Notice that
\[\kappa_{m}\left[\int\phi\md\mu_N^t\right]=\sum_{\pi\vdash[m]}(-1)^{\vert\pi\vert-1}(\vert\pi\vert-1)!\prod_{B\in\pi}\mathbb{E}\left[\left(\int\phi\md\mu_N^t\right)^{\vert B\vert}\right]\]
and that
\begin{align}
    \mathbb{E}\left[\left(\int\phi\md\mu_N^t\right)^j\right]=&\frac{1}{N^j}\sum_{l_1,\dots,l_j=1}^m\mathbb{E}^N\left[\prod_{k=1}^n\phi(X^t_{j_k})\right]\notag\\
    =&\frac{1}{N^j}\sum_{\pi\vdash[j]}N(N-1)\cdots(N-\vert\pi\vert+1)\int\left(\bigotimes_{B\in\pi}\phi^{\vert B\vert}\right)f_{\vert\pi\vert,N}.
\end{align}
Moreover,
\[f_{j,N}=\sum_{\pi\vdash[j]}\prod_{P\in\pi}g_{P,N}.\]
Thus we may express cumulants in terms of correlation functions
\begin{align}
    \kappa_m\left[\int\phi\md\mu_N^t\right]=\sum_{\pi\vdash[m]}N^{\vert\pi\vert-m}\sum_{\rho\vdash\pi}K_N(\rho)\int\left(\bigotimes_{B\in\pi}\phi^{\vert B\vert}\right)\left(\bigotimes_{P\in\rho}g_{\vert P\vert,N}\right)\md x_1\dots\md x_{\vert\pi\vert},
\end{align}
where the variables of $\phi$ and $g_{\vert P\vert,N}$ are determined by the partitions $\pi$ and $\rho$ and the constant $K_N(\rho)$ is given by
\[K_N(\rho)=\sum_{\iota\ge\rho}(-1)^{\vert\iota\vert-1}(\vert\iota\vert-1)!\prod_{C\in\iota}\Bigg(\left(1-\frac{1}{N}\right)\cdots\left(1-\frac{\vert C\vert-1}{N}\right)\Bigg).\]
The following lemma provides an upper bound of $K_N(\rho)$. 
\begin{lemma}\label{KNBound}
    For all $m=1,2,\dots$ and all $\rho\vdash[m]$,
    \[\vert K_N(\rho)\vert\le m!N^{1-\vert\rho\vert}.\]
\end{lemma}
We leave the proof of Lemma \ref{KNBound} in Appendix \ref{aLemma}. Given that Lemma \ref{KNBound} holds, some crude bounds are enough for the proof. 
By Plancherel' identity, we have
\begin{align}
    \Bigg\vert\kappa_{m}\left[\int\phi\md\mu_N^t\right]\Bigg\vert=&\left\vert\sum_{\pi\vdash[m]}N^{\vert\pi\vert-m}\sum_{\rho\vdash\pi}K_N(\rho)\int\left(\bigotimes_{B\in\pi}\phi^{\vert B\vert}\right)\left(\bigotimes_{P\in\rho}g_{\vert P\vert,N}\right)\md x_1\dots\md x_{\vert\pi\vert}\right\vert\notag\\
    =&\left\vert\sum_{\pi\vdash[m]}N^{\vert\pi\vert-m}\sum_{\rho\vdash\pi}K_N(\rho)\sum\left(\bigotimes_{B\in\pi}\phi^{\vert B\vert}\right)^\wedge\left(\overline{\bigotimes_{P\in\rho}g_{\vert P\vert,N}}\right)^\wedge\right\vert\notag\\
    \le&\sum_{\pi\vdash[m]}N^{\vert\pi\vert-m}\sum_{\rho\vdash\pi}\vert K_N(\rho)\vert\prod_{B\in\pi}\left\Vert\left(\phi^{\vert B\vert}\right)^\wedge\right\Vert_{l^1}\prod_{P\in\rho}\left\Vert g_{\vert P\vert,N}\right\Vert_{\hat{l}^\infty}.
\end{align}
Thanks to Young's inequality, for any $l=1,2,\dots$,
\[\Vert(\phi^l)^{\wedge}\Vert_{l^1}\le\Vert\hat{\phi}\Vert^l_{l^1}.\]
Thus we have
\begin{align}
    &\Bigg\vert\kappa_{m}\left[\int\phi\md\mu_N^t\right]\Bigg\vert\notag\\
    \le&\sum_{\pi\vdash[m]}N^{\vert\pi\vert-m}\sum_{\rho\vdash\pi}\vert\pi\vert!N^{1-\vert\rho\vert}\Vert\hat{\phi}\Vert_{l^1}^m\prod_{P\in\rho}\frac{2(\vert P\vert-1)!}{\vert P\vert^2N^{\vert P\vert-1}}\notag\\
    =&\Vert\hat{\phi}\Vert_{l^1}^mN^{1-m}\sum_{\pi\vdash[m]}\vert\pi\vert!\sum_{\rho\vdash\pi}2^{\vert\rho\vert}\prod_{P\in\rho}\frac{(\vert P\vert-1)!}{\vert P\vert^2}.
\end{align}
Notice that the number of partitions of an $L$-element set is at most $2^{L-1}L!$. In fact, we have $2^{L-1}$ ways to divide $L$ ordered vacant positions into groups and $L!$ ways to fill these positions with all the $L$ elements.\\
Take $L=\vert\pi\vert$ first for each $\pi\vdash[m]$. For $\rho\vdash\pi$, we have $\vert\rho\vert\le \vert\pi\vert$. Moreover, if we write the factorial $(\vert P\vert-1)!$ out for each $P\in\rho$ and find proper factor in $\vert\pi\vert!$ to compare with, we may estimate the product by
\[\prod_{P\in\rho}\frac{(\vert P\vert-1)!}{\vert P\vert^2}\le\vert\pi\vert!.\]
Thus
\begin{align}
    \Bigg\vert\kappa_{m}\left[\int\phi\md\mu_N^t\right]\Bigg\vert
    \le\Vert\hat{\phi}\Vert_{l^1}^mN^{1-m}\sum_{\pi\vdash[m]}(\vert\pi\vert!)^32^{\vert\pi\vert-1}2^{\vert\pi\vert}.
\end{align}
Then take $L=m$ and notice that $\vert\pi\vert\le m$ for all $\pi\vdash[m]$, we have
\begin{align}
    \Bigg\vert\kappa_{m}\left[\int\phi\md\mu_N^t\right]\Bigg\vert
    \le\Vert\hat{\phi}\Vert_{l^1}^mN^{1-m}(m!)^34^m(m!)2^{m-1}
    \le(m!)^4\left(8\Vert\hat{\phi}\Vert_{l^1}\right)^mN^{1-m}.
\end{align}
\end{proof}
\subsection{Proof of Corollary \ref{CLT}}
By virtue of Proposition \ref{BerryEssen} and Proposition \ref{preBound}, the first assertion of Corollary \ref{CLT} is a consequence of the second assertion. Now we compute
\begin{align}
    \sigma^2_{\phi,N}=&\mathbb{E}\left(\int\phi\md\mu_N^t\right)^2-\left(\mathbb{E}\int\phi\md\mu_N^t\right)^2\notag\\
    =&\frac{1}{N}\int\phi^2\md f_{1,N}+\frac{N-1}{N}\int\phi\otimes\phi\md f_{2,N}-\left(\int\phi\md f_{1,N}\right)^2\notag\\
    =&\frac{1}{N}\int\phi^2\md g_{1,N}-\frac{1}{N}\left(\int\phi\md g_{1,N}\right)^2+\frac{N-1}{N}\int\phi\otimes\phi\md g_{2,N}.
\end{align}
Thanks to Plancherel's identity, we may apply Proposition \ref{MFL} and \ref{BCorrection} to conclude.
\subsection*{Acknowledgements}
The author thanks Zhenfu Wang for inspiring discussions. This work was partially supported by the National Key R\&D Program of China, Project Number 2021YFA1002800 and NSFC grant No.12031009.
\begin{appendices}
   \section{Proof of Lemma \ref{KNBound}}\label{aLemma}
    The assertion of Lemma \ref{KNBound} is obvious when $\vert\rho\vert=1,2$. For $\rho\vdash[m]$ with $\vert\rho\vert\ge3$, consider the polynomial
    \[K(x,\rho):=\sum_{\iota\ge\rho}(-1)^{\vert\iota\vert-1}(\vert\iota\vert-1)!\prod_{C\in\iota}\bigg(\Big(1-x\Big)\dots\Big(1-(\vert C\vert-1)x\Big)\bigg).\]
    Then we have that $K_N(\rho)=K(1/N,\rho)$. For any polynomial $f(x)$, denote by $C_l(f)$ the coefficient of $x^l$ in $f(x)$($C_l(f)=0$ when $l>\mathrm{deg}(f)$). We claim that $C_{l}(K(x,\rho))=0$ for $l<\vert\rho\vert-1$ and that for all $m=1,2,\dots$
    \[\sum_{l=\vert\rho\vert-1}^m\left\vert C_l\Big(K(x,\rho)\Big)\right\vert\le m!.\]
    It is easy to see that once the claim is proved, the proposition is automatically true. Now verify the vanishing of the coefficients by induction. Suppose that we have checked all the $\rho'\vdash[m]$ for $m\le k$, consider the partition $\rho$ of $[k+1]$. We distinguish two cases, the singleton $\{k+1\}\notin\rho$ and $\{k+1\}\in\rho$.\\
    In the former case, there exist a nonempty set $A\subset[k]$ such that $A\cup\{k+1\}\in\rho$. For any $B\in\rho$, define $\psi(B)=B$ if $k+1\notin B$ and $=A$ otherwise. Then $\rho'=\psi(\rho)\vdash[k]$ and $\psi$ is a bijection from $\rho$ to $\rho'$. Thus we may calculate for $l<\vert\rho\vert-1$,
    \begin{align}
        &C_l\Big(K(x,\rho)\Big)\notag\\
        =&\sum_{\iota\ge\rho}(-1)^{\vert\iota\vert-1}(\vert\iota\vert-1)!C_l\Bigg(\prod_{C\in\iota}\bigg(\Big(1-x\Big)\dots\Big(1-(\vert\psi( C)\vert-1)x\Big)\bigg)\Bigg)\notag\\
        &-\sum_{\iota\ge\rho}(-1)^{\vert\iota\vert-1}(\vert\iota\vert-1)!C_{l-1}\Bigg(\prod_{C\in\iota}\bigg(\Big(1-x\Big)\dots\Big(1-(\vert\psi( C)\vert-1)x\Big)\bigg)\Bigg)\Bigg(\sum_{Q:\exists P\in\iota,Q\cup A\subset P}\vert \psi(Q)\vert\Bigg)\notag\\
        =&C_l\Big(K(x,\rho')\Big)-\vert A\vert C_{l-1}\Big(K(x,\rho')\Big)\notag\\
        &-\sum_{\iota\ge\rho}(-1)^{\vert\iota\vert-1}(\vert\iota\vert-1)!C_{l-1}\Bigg(\prod_{C\in\iota}\bigg(\Big(1-x\Big)\dots\Big(1-(\vert\psi( C)\vert-1)x\Big)\bigg)\Bigg)\Bigg(\sum_{\substack{Q\ne A\cup\{k+1\}\\\exists P\in\iota,Q\cup A\subset P}}\vert Q\vert\Bigg).
    \end{align}
    By the Induction Hypothesis, the first two terms vanishes. We change the order of the sum and find that
    \begin{align}
        &C_l\Big(K(x,\rho)\Big)\notag\\
        =&0-\sum_{Q\in\rho,Q\ne A\cup\{k+1\}}\vert Q\vert\sum_{\iota\ge\rho(Q)}(-1)^{\vert\iota\vert-1}(\vert\iota\vert-1)!C_{l-1}\Bigg(\prod_{C\in\iota}\bigg(\Big(1-x\Big)\dots\Big(1-(\vert C\vert-1)x\Big)\bigg)\Bigg)\notag\\
        =&-\sum_{Q\in\rho,Q\ne A\cup\{k+1\}}\vert Q\vert C_{l-1}\Bigg(K(x,\rho(Q))\Bigg)\notag\\
        =&0,
    \end{align}
    where $\rho(Q)$ is the minimal element among all the partitions of $[k]$ such that $\rho(Q)\ge\rho'$ and that $Q\cup A\in \rho(Q)$. Thus $\vert\rho(Q)\vert=\vert\rho\vert-1\ge2$ and $l-1<\vert\rho(Q)\vert-1$, ensuring the last ``$=$'' above thanks to the Induction Hypothesis.\\
    In the latter case, denote by $\rho'=\rho\backslash\{\{k+1\}\}$. We separate the sum into two parts according to whether $\{k+1\}$ is in $\iota$. Then we have
    \begin{align}
        &C_l\Big(K(x,\rho)\Big)\notag\\
        =&\sum_{\iota\ge\rho,\{k+1\}\notin\iota}(-1)^{\vert\iota\vert-1}(\vert\iota\vert-1)!C_l\Bigg(\prod_{C\in\iota}\bigg(\Big(1-x\Big)\dots\Big(1-(\vert C\vert-1)x\Big)\bigg)\Bigg)\notag\\
        &+\sum_{\iota\ge\rho,\{k+1\}\in\iota}(-1)^{\vert\iota\vert-1}(\vert\iota\vert-1)!C_l\Bigg(\prod_{C\in\iota}\bigg(\Big(1-x\Big)\dots\Big(1-(\vert C\vert-1)x\Big)\bigg)\Bigg).
    \end{align}
Notice that for each $\iota\ge\rho$ with $\{k+1\}\notin\iota$, we may regard it as a way to add $k+1$ into some elements of certain $\iota'\ge\rho'$. While for each $\iota\ge\rho$ with $\{k+1\}\in\iota$, we may find a unique $\iota'\ge\rho'$ such that $\iota=\iota'\cup\{\{k+1\}\}$. Sum according to these $\iota'$'s,
    \begin{align}
        &C_l\Big(K(x,\rho)\Big)\notag\\
        =&\sum_{\iota'\ge\rho'}(-1)^{\vert\iota'\vert-1}(\vert\iota'\vert-1)!C_l\Bigg(\prod_{C\in\iota'}\bigg(\Big(1-x\Big)\dots\Big(1-(\vert C\vert-1)x\Big)\bigg)\bigg(\sum_{C\in\iota'}\Big(1-\vert C\vert x\Big)-\vert\iota'\vert\bigg)\Bigg)\notag\\
        =&C_l(kxK(x,\rho'))\notag\\
        =&kC_{l-1}(K(x,\rho'))\notag\\
        =&0.
    \end{align}
     The last ``$=$'' is due to the Induction Hypothesis and the fact that $\vert\rho'\vert=\vert\rho\vert-1>l-1$.\\
    Thus the first half of our claim is verified.\\
    As for the last half, We can also carry out induction. The bound in the claim is true for $\vert\rho\vert=1,2$. Suppose we have got the bound for $\rho\vdash[m]$ with $m\le k$. Then for the set $[k+1]$, we distinguish two cases, the singleton $\{k+1\}\notin\rho$ and $\{k+1\}\in\rho$, as well.\\
    In the former case, with the same notation, we have the bound
    \[
        \left\vert C_l\Big(K(x,\rho)\Big)\right\vert\le\left\vert C_l\Big(K(x,\rho')\Big)\right\vert+\vert A\vert\left\vert C_{l-1}\Big(x,\rho'\Big)\right\vert+\sum_{Q\in\rho,Q\ne A\cup\{k+1\}}\vert Q\vert\left\vert C_{l-1}\Big(K_{l-1}(x,\rho(Q))\Big)\right\vert.
    \]
    In the latter case,
    \[
        \left\vert C_l\Big(K(x,\rho)\Big)\right\vert\le k\left\vert C_{l-1}\Big(K(x,\rho)\Big)\right\vert.
    \]
    Summing over $l$ and we find that in both case
    \[\sum_{l=0}^\infty\left\vert C_l\Big(K(x,\rho)\Big)\right\vert\le (k+1)\sup_{m\le k}\sup_{\iota\vdash[m]}\sum_{l=0}^\infty\left\vert C_l\Big(K(x,\iota)\Big)\right\vert,\]
    which guarantees the bound.
\end{appendices}
\bibliographystyle{amsalpha}
\bibliography{ref}
\end{document}